\title[Richardson elements]{Existence of Richardson elements for seaweed 
Lie algebras of type $\mathbb{B}$,  $\mathbb{C}$, $\mathbb{D}$}
\author{Bernt Tore Jensen and Xiuping Su}
\theoremstyle{definition}
\newtheorem{theorem}{Theorem}[section]
\newtheorem{lemma}[theorem]{Lemma}
\newtheorem{proposition}[theorem]{Proposition}
\newtheorem{definition}[theorem]{Definition}
\newtheorem{example}[theorem]{Example}
\newtheorem{question}[theorem]{Question}
\newcommand{\Hom}{\mathrm{Hom}}
\newtheorem{remark}[theorem]{Remark}
\newcommand{\dimv}{\mathrm{\underline {dim}}}
\newcommand{\supp}{\mathrm{supp}}
\renewcommand{\hom}{\mathrm{Hom}}
\newcommand{\ext}{\mathrm{Ext}}
\newcommand{\minus}{\backslash}
\newcommand{\lra}{\longrightarrow}
\newcommand{\ra}{\rightarrow}
\newcommand{\sdp}{\times\kern-.2em\vrule height1.1ex depth-.05ex}
\newcommand{\epi}{\lra \kern-.8em\ra}
\newcommand{\g}{\mathfrak{g}}
\newcommand{\h}{\mathfrak{h}}
\renewcommand{\a}{\mathfrak{a}}
\renewcommand{\c}{\mathfrak{c}}
\renewcommand{\l}{\mathfrak{l}}
\thanks{This work was supported by EPSRC 1st grant EP/1022317/1. The results were written up
during the authors' visit to the University of New South Wales. 
Both authors would like to thank J. Du and the School of Mathematics and
Statistics for their hospitality. 
}
\begin{document}

\begin{abstract}  {Seaweed Lie algebras are  a natural generalisation of parabolic subalgebras of reductive Lie algebras. The well-known 
Richardson Theorem says that the adjoint action of a parabolic group has a dense open orbit in the nilpotent radical of its Lie algebra \cite{richardson}.
We call elements in the open orbit 
Richardson elements. 
In \cite{JSY} together with Yu, we generalized  Richardson's Theorem and showed that  Richardson elements exist for seaweed Lie algebras of type $\mathbb{A}$. 
Using GAP, we checked that Richardson elements exist for all exceptional simple Lie algebras except $\mathbb{E}_8$, where we found a counterexample. 

In this paper, we complete the story on Richardson elements  for seaweeds of finite type, by  
showing that  they exist for any seaweed Lie algebra of type $\mathbb{B}$, $\mathbb{C}$ and $\mathbb{D}$. By decomposing a seaweed into a sum of subalgebras and analysing 
their stabilisers, we obtain a sufficient condition for the existence of Richarson elements. The sufficient condition is then verified using quiver representation theory.
More precisely, using the categorical construction of Richardson elements in type $\mathbb{A}$, we prove that the sufficient condition is satisfied for all seaweeds of type $\mathbb{B}$, $\mathbb{C}$ and $\mathbb{D}$, except in two special cases, where we give a direct
proof.}\\

\noindent Keywords: seaweed Lie algebras, Richardson elements, stabilizers and endomorphisms of representations of quivers.

\end{abstract}

\maketitle

\section{Introduction}

Throughout the field $k=\mathbb{C}$,
$\mathfrak{g}$ is a reductive Lie algebra and $G$ is a connected reductive algebraic group with 
Lie algebra $\mathfrak{g}$.

\begin{definition} \cite{S1, P}
A Lie subalgebra $\mathfrak{q}$ of $\mathfrak{g}$ is called a \emph{seaweed subalgebra} 
if there exists a pair $(\mathfrak{p},\mathfrak{p}')$ of parabolic subalgebras of $\mathfrak{g}$
such that $\mathfrak{q}=\mathfrak{p}\cap \mathfrak{p}'$ and $\mathfrak{p}+\mathfrak{p}'=\mathfrak{g}$.
Two such parabolic subalgebras are said to be \emph{opposite}.
\end{definition}

Seaweed Lie algebras (later also called biparabolic algebras, see e.g. \cite{B1, B2}) were defined by Dergachev and Kirillov in their 
study of indexes of Lie algebras \cite{S1} and was generalised 
by Panyushev to arbitrary reductive Lie algebras \cite{P}. 
By definition, parabolics are seaweed Lie algebras. Substantial work on seaweeds
has been done on generalising results on parabolic algebras and beyond. Among others, there are further works on 
indexes by Joseph \cite{B1, B2}, on affine slices for the coadjoint action by Yu and Tauvel \cite{Tauvel2} and Joseph \cite{B2, B3}. 
Also, Panyushev and Yakimova study meander graphs in \cite{PY1, PY2}.
Along the line of generalising results on parabolic algebras, Baur and Moreau 
study quasi-reductive biparabolic algebras in \cite{BM}.

We are interested in  the adjoint action of a seaweed 
Lie algebra on its nilpotent radical and the density of the action, with a view from quiver representation theory. 
\begin{definition}
An element $x$ in the nilpotent radical $\mathfrak{n}$ of a seaweed Lie algebra $\mathfrak{q}$ is called a 
{\it Richardson element}  if $[\mathfrak{q},x]=\mathfrak{n}$.
\end{definition}

In the case where $\mathfrak{q}$ is parabolic, a well-known theorem of Richardson 
\cite{richardson} (see also \cite[Chapter 33]{TYbook}) says that Richardson elements exist. In this case, Br\"{u}stle, 
Hille, Ringel and R\"{o}hrle gave a categorical construction of Richardson elements in type $\mathbb{A}$, using representations of a double 
quiver (with relations) of a linear quiver \cite{BHRR}. 
The following natural question was raised independently by Duflo and Panyushev.

\begin{question}\cite{JSY}\label{question}
Does a seaweed Lie algebra have a Richardson element?
\end{question}

Surprisingly, 
a quiver model can also be constructed from a given seaweed Lie algebra to understand the adjoint action on the nilpotent radical. In \cite{JSY} together with Yu, we made use of the construction in \cite{BHRR} to build rigid modules for the quiver model and obtained a positive answer to Question \ref{question} for 
all seaweeds of type $\mathbb{A}$. An example (Example \ref{newexample}) is given in Section \ref{section3} to illustrate the construction.  This quiver model is the double quiver  (with relations) of a quiver of type $\mathbb{A}$. The path algebra of the double quiver with relations is quasi-hereditary and had been studied by Hille and Vossieck in work on the 
radical bimodule of a hereditary algebra \cite{HV}.

In this paper we prove the existence of Richardson
elements for seaweed Lie algebras of type $\mathbb{B}, \mathbb{C}$ and $\mathbb{D}$. 

\begin{theorem}\label{main0}
Let $\g$ be a Lie algebra of type $\mathbb{B}$,  $\mathbb{C}$ or $\mathbb{D}$. 
Then any seaweed Lie algebra in $\g$ has a Richardson element. 
\end{theorem}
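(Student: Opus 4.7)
The plan is to work with the standard matrix realizations of the classical Lie algebras. A seaweed $\mathfrak{q} \subset \mathfrak{g}$ of type $\mathbb{B}$, $\mathbb{C}$ or $\mathbb{D}$ is the intersection of two opposite parabolics, which in matrix terms amounts to the stabilizer of a pair of ``opposite'' flags $0 \subset V_1 \subset \cdots \subset V_r = V$ and $0 \subset V'_1 \subset \cdots \subset V'_s = V$ subject to compatibility with the defining bilinear form (symmetric for $\mathbb{B}/\mathbb{D}$, skew for $\mathbb{C}$). I would begin by setting up this picture so that elements of $\mathfrak{q}$ and its nilpotent radical $\mathfrak{n}$ are described explicitly as matrices preserving both flags, with the block entries satisfying the symmetry condition imposed by the form.

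Next I would decompose $\mathfrak{q}$ as a vector space $\mathfrak{q} = \mathfrak{q}_A \oplus \mathfrak{q}_C \oplus \mathfrak{q}_R$, where $\mathfrak{q}_A$ is naturally a seaweed of type $\mathbb{A}$ (the block that sees only one ``half'' of the ambient space, where the form constraint is vacuous), $\mathfrak{q}_C$ is a parabolic subalgebra of a smaller classical Lie algebra of the same series (the part lying on the middle block that inherits the form), and $\mathfrak{q}_R$ is the remaining off-diagonal piece linking the two halves. The nilpotent radical $\mathfrak{n}$ decomposes compatibly. By Richardson's theorem applied to $\mathfrak{q}_C$ and the main result of \cite{JSY} applied to $\mathfrak{q}_A$, I can pick Richardson elements $x_A$ and $x_C$ for these two summands, and then look for a Richardson element of $\mathfrak{q}$ of the form $x = x_A + x_C + x_R$ with a carefully chosen $x_R \in \mathfrak{q}_R \cap \mathfrak{n}$.

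The key task is to verify $[\mathfrak{q}, x] = \mathfrak{n}$. Since $[\mathfrak{q}_A, x_A]$ and $[\mathfrak{q}_C, x_C]$ already exhaust the corresponding pieces of $\mathfrak{n}$, the remaining issue is that, modulo these pieces, the bracket must hit all of $\mathfrak{n}_R$. This amounts to surjectivity of a linear map built from $\mathrm{ad}(x_A)$, $\mathrm{ad}(x_C)$ and the free parameter $x_R$. I would extract a sufficient condition for this surjectivity phrased in terms of how the stabilizers of $x_A$ and $x_C$ act on $\mathfrak{q}_R$; this is essentially \textbf{Lemma \ref{keylemma}}. The crux of the argument is then an explicit analysis of these stabilizers: here I would invoke the quiver-theoretic picture behind \cite{JSY}, in which $\mathrm{stab}(x_A)$ is identified with the endomorphism ring of a concrete representation of a type $\mathbb{A}$ quiver and its action on $\mathfrak{q}_R$ can be read off from morphism spaces in the corresponding module category. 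This is where I expect the main obstacle to lie, because the relevant endomorphism rings are not semisimple and the map into $\mathfrak{q}_R$ must be controlled through rather delicate combinatorics of the composition factors of these quiver modules.

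Finally, the sufficient condition of \textbf{Lemma \ref{keylemma}} can fail in a narrow range of configurations in type $\mathbb{D}$, where the pairing between the two halves of an even-dimensional orthogonal space produces accidental degeneracies at the middle block. For this residual case I would construct a Richardson element by hand, as reflected in \textbf{Lemma \ref{lemmam2}}, by exploiting the precise combinatorics of the isotropic flags around the centre of the form. Combining the generic argument driven by \textbf{Lemma \ref{keylemma}} with the direct construction of \textbf{Lemma \ref{lemmam2}} then establishes existence of a Richardson element in every seaweed of type $\mathbb{B}$, $\mathbb{C}$ or $\mathbb{D}$, proving Theorem \ref{main0}.
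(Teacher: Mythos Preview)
Your broad outline --- split the seaweed into a type $\mathbb{A}$ seaweed and a classical parabolic, invoke \cite{JSY} and Richardson's theorem on the pieces, and handle a residual type $\mathbb{D}$ case separately --- does match the paper's architecture. But the mechanism you describe mislocates the actual obstruction. In the paper's decomposition (Lemma~\ref{decomposition}) one has $\mathfrak{q}_{S,T}=\mathfrak{a}_{S,T}+\mathfrak{c}_{S,T}$ with $\mathfrak{n}_{S,T}=\mathfrak{n}_{\mathfrak{a}}\oplus\mathfrak{n}_{\mathfrak{c}}$: there is \emph{no} extra off-diagonal piece $\mathfrak{q}_R$ in the seaweed, and the Richardson element sought is simply $r_1+r_2$ with $r_i$ in the two nilpotent radicals. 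The difficulty is not hitting some $\mathfrak{n}_R$, but that $\mathfrak{a}_{S,T}$ and $\mathfrak{c}_{S,T}$ are \emph{not} in direct sum --- they overlap in a Levi block $\mathfrak{l}$ --- so one cannot act independently on the two halves. What must be shown is that $[\mathfrak{c}_{r_2},r_1]=\mathfrak{n}_{\mathfrak{c}}$, where $\mathfrak{c}_{r_2}\subseteq\mathfrak{c}_{S,T}$ is the subspace whose $\mathfrak{l}$-component lies in $\mathrm{stab}_{\mathfrak{a}_{S,T}}(r_2)_{|\mathfrak{l}}$ (Lemma~\ref{lemma2}).

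Your proposal also misses the device that actually verifies this. Lemma~\ref{keylemma} is not a statement about stabilizers acting on an off-diagonal space; it compares $\mathrm{stab}_{\mathfrak{a}_{S,T}}(r_2)_{|\mathfrak{l}}$ with $\mathrm{stab}_{\mathfrak{a}_U}(r_2')_{|\mathfrak{l}}$, where $\mathfrak{a}_U$ sits inside an \emph{auxiliary parabolic} $\mathfrak{p}^+_U$ of a possibly different Lie algebra $\mathfrak{g}'$ of the same type. Richardson's theorem for $\mathfrak{p}^+_U$ (which \emph{does} have an extra piece $\mathfrak{d}_U$) yields Lemma~\ref{lemma11}, and the quiver description of type $\mathbb{A}$ stabilizers (Lemmas~\ref{seaweedpara} and~\ref{parapara}) shows both restrictions to $\mathfrak{l}$ are the \emph{same} standard parabolic, forcing equality. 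Finally, the condition of Lemma~\ref{keylemma} does not ``fail'' in the special type $\mathbb{D}$ case $(\epsilon,\omega)=(2,1)$; rather the decomposition of Lemma~\ref{decomposition} itself is unavailable there, and Lemma~\ref{lemmam2} proceeds by a genuinely different splitting $\mathfrak{q}_{S,T}=\mathfrak{q}_{W,T}\oplus\mathfrak{q}_{S,T}^1$ and an explicit analysis of $\mathrm{End}_D(X^i)$ via Lemma~\ref{prettylemma}.
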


A natural approach to answer Question \ref{question} would be to adapt Richardson's proof for the case of parabolic algebras. However, the fact that the corresponding parabolic group is the normaliser of the nilpotent radical plays an important role in Richardson's proof, but fails for seaweed Lie algebras. At this point,  
we emphasise the advantages of techniques from quiver representation theory, which do not require this fact, as can be seen in 
Br\"{u}stle, Hille, Ringel and R\"{o}hrle's construction for parabolic algebras \cite{BHRR} and our construction for seaweeds 
  \cite{JSY}. 
The key ingredient of the categorical approach is the interplay between Lie algebras and quiver representation theory. For instance, seaweed Lie algebras of type $\mathbb{A}$ are endomorphism algebras of projective representation of a quiver of type $\mathbb{A}$. 
Further, endomorphism algebras of representations that give us Richardson elements in seaweeds of type  $\mathbb{A}$  correspond to stabilisers of the Richardson elements. In this paper we exclusively analyse 
local properties of endomorphisms at a vertex of the quiver and apply these properties to prove the main theorem. 


As a consequence of Theorem \ref{main0} and the results in \cite{JSY}, 
Richardson elements exist for  all seaweed Lie algebras of finite type but $\mathbb{E}_8$.

The remainder of this paper is organised as follows. In Section 2, we recall the notion of a standard
seaweed Lie algebra and prove some important lemmas.  
We decompose standard seaweeds, including standard parabolics, as sums of 
subalgebras and analyse how their stabilizers act.
Further, we show that a local property of stabilisers of Richardson elements for seaweeds of type 
$\mathbb{A}$ is sufficient for the existence of Richardson elements for all seaweeds of other types, 
except in two special cases. 
This condition can be verified using the categorical construction of Richardson elements \cite{BHRR, JS, JSY} 
in type $\mathbb{A}$. 
As such, we recall the  construction  in type 
$\mathbb{A}$ and explain the link between seaweed Lie algebras and representations of quivers in Section 3. We prove essential results on stabilisers in Section 4. 
In Section 5, we prove the main results. In both the general and one of the two special cases, techniques from quiver representation theory
play an important role in the proofs.


\section{Richardson elements and decomposition of seaweeds}

\subsection{Standard seaweeds and parabolics}

We fix a Borel subalgebra $\mathfrak{b}$ of $\mathfrak{g}$ and a
Cartan subalgebra $\mathfrak{h}$ contained in $\mathfrak{b}$.
Denote by $\Phi$, $\Phi^{+}$, $\Phi^{-}$ and $\Pi$, respectively,  the root system,  
the set of positive roots, the set of negative roots and the set of positive simple roots, 
determined by $\mathfrak{h}$, $\mathfrak{b}$ and
$\mathfrak{g}$. For $\alpha \in \Phi$, denote by $\mathfrak{g}_{\alpha}$ the root space 
corresponding to
$\alpha$. Write $$\alpha=\sum_{\alpha_i\in \Pi}x_i\alpha_i.$$  We say that $\alpha$ is supported at a 
positive simple root $\alpha_i$ if $x_i\not= 0$ and call the set of all such simple roots the {\it support} of $\alpha$. 
For $S, T\subset \Pi$, let $\Phi_S$ be the set of roots with support in $S$, 
$$
\Phi_{S}^{\pm}=\Phi^{\pm}\cap \Phi_{S}, \ 
\mathfrak{p}_{S}^{\pm} =
\mathfrak{h} \oplus \bigoplus_{\alpha\in \Phi_{S}^{\mp}\cup \Phi^{\pm}} \mathfrak{g}_{\alpha}  \hbox{ and }
\mathfrak{q}_{S,T}=\mathfrak{p}^{-}_S\cap \mathfrak{p}^+_T.
$$
Note that $\mathfrak{p}_S^{\pm}$ are parabolic subalgebras and $\mathfrak{q}_{S, T}$ is a seaweed 
Lie algebra. Parabolics and seaweeds constructed in this way are said to be \emph{standard} with 
respect to the choice of $\mathfrak{h}$ and $\mathfrak{b}$.

\begin{proposition}{\rm\cite{P,TYbook}} \label{standardseaweed}
Any seaweed Lie algebra in $\mathfrak{g}$ is $G$-conjugate to a standard seaweed Lie algebra.
\end{proposition}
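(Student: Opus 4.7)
My plan is to reduce a given seaweed $\mathfrak{q}=\mathfrak{p}\cap\mathfrak{p}'$ to standard form by a sequence of $G$-conjugations that successively normalise where $\mathfrak{h}$ and $\mathfrak{b}$ sit. The approach has three movements: place $\mathfrak{h}$ inside $\mathfrak{p}\cap\mathfrak{p}'$, place $\mathfrak{b}$ inside $\mathfrak{p}$, then place $\mathfrak{b}^-$ inside $\mathfrak{p}'$ without disturbing the previous choices.

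First I would invoke the standard structural fact that $\mathfrak{p}+\mathfrak{p}'=\mathfrak{g}$ forces $\mathfrak{p}\cap\mathfrak{p}'$ to be a common Levi factor of both parabolics (cf.\ Tauvel--Yu). In particular $\mathfrak{p}\cap\mathfrak{p}'$ contains a Cartan subalgebra of $\mathfrak{g}$, and by $G$-conjugacy of Cartans I may replace $\mathfrak{q}$ by a $G$-conjugate and assume $\mathfrak{h}\subset\mathfrak{p}\cap\mathfrak{p}'$. Both parabolics then decompose in terms of $\mathfrak{h}$-root spaces.

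Second, $\mathfrak{p}$ contains some Borel $\mathfrak{b}_1\supset\mathfrak{h}$. The Weyl group $W=N_G(\mathfrak{h})/Z_G(\mathfrak{h})$ acts transitively on the Borels containing $\mathfrak{h}$, so I choose a representative $n\in N_G(\mathfrak{h})$ of the element carrying $\mathfrak{b}_1$ to $\mathfrak{b}$. Conjugation by $n$ normalises $\mathfrak{h}$, so $\mathfrak{h}$ remains in both parabolics, while now $\mathfrak{b}\subset\mathfrak{p}$; thus $\mathfrak{p}=\mathfrak{p}_T^+$ for a unique $T\subset\Pi$. The crucial third step is to bring $\mathfrak{p}'$ simultaneously into standard form. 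The condition $\mathfrak{p}_T^+ + \mathfrak{p}' = \mathfrak{g}$ already forces every $\mathfrak{g}_\alpha$ with $\alpha\in\Phi^-\setminus\Phi_T$ into $\mathfrak{p}'$, so the only remaining ambiguity concerns how $\mathfrak{p}'$ meets the Levi $\mathfrak{l}_T$ of $\mathfrak{p}_T^+$. Since $\mathfrak{p}'\cap\mathfrak{l}_T$ is a parabolic of the reductive algebra $\mathfrak{l}_T$ containing $\mathfrak{h}$, I would conjugate by a representative of an element of the Weyl group of $\mathfrak{l}_T$, viewed as sitting in $N_{L_T}(\mathfrak{h})\subset P_T^+$. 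Such a conjugation stabilises $\mathfrak{p}_T^+$, hence leaves $\mathfrak{p}$ in its already-standardised form, while it can be chosen to carry $\mathfrak{p}'\cap\mathfrak{l}_T$ to the standard form determined by $\mathfrak{b}^-\cap\mathfrak{l}_T$. Combining with the forced negative root spaces outside $\mathfrak{l}_T$, this gives $\mathfrak{b}^-\subset\mathfrak{p}'$, so $\mathfrak{p}'=\mathfrak{p}_S^-$ for some $S\subset\Pi$ and $\mathfrak{q}=\mathfrak{q}_{S,T}$ is standard.

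The main obstacle is this last step: arranging that the residual freedom to move $\mathfrak{p}'$ is absorbed inside the stabiliser $P_T^+$ of $\mathfrak{p}$. Conceptually this is a Bruhat-type statement for pairs of opposite parabolics and relies on the reductive structure of the common Levi $\mathfrak{p}\cap\mathfrak{p}'$ together with conjugacy of Borels inside $\mathfrak{l}_T$. Once formulated correctly this reduces the problem to a statement inside the smaller reductive group associated with $\mathfrak{l}_T$, which can be treated by the same principles as the original question in lower rank.
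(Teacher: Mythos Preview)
The paper does not prove this proposition; it is simply quoted from \cite{P,TYbook}. Your outline is the standard argument and is essentially correct, with one misstatement worth flagging.

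You claim that $\mathfrak{p}+\mathfrak{p}'=\mathfrak{g}$ forces $\mathfrak{p}\cap\mathfrak{p}'$ to be a common Levi factor of both parabolics. That is false in general: $\mathfrak{p}\cap\mathfrak{p}'$ is the seaweed itself, which typically has a nontrivial nilpotent radical (in the standard picture, $\mathfrak{q}_{S,T}=\mathfrak{l}_{S,T}\oplus\mathfrak{n}_{S,T}$ with $\mathfrak{n}_{S,T}\neq 0$ unless $S=T$). What you actually need, and what is true, is only that $\mathfrak{p}\cap\mathfrak{p}'$ contains a Cartan subalgebra of $\mathfrak{g}$; this holds for any pair of parabolics, since the intersection of any two Borel subgroups already contains a maximal torus. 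With that correction your first step goes through unchanged.

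The heart of the argument is your third step, and you have it right: once $\mathfrak{h}\subset\mathfrak{p}\cap\mathfrak{p}'$ and $\mathfrak{b}\subset\mathfrak{p}=\mathfrak{p}_T^+$, the condition $\mathfrak{p}_T^++\mathfrak{p}'=\mathfrak{g}$ forces $\mathfrak{g}_\alpha\subset\mathfrak{p}'$ for every $\alpha\in\Phi^-\setminus\Phi_T$, and $\mathfrak{p}'\cap\mathfrak{l}_T$ is a parabolic of $\mathfrak{l}_T$ containing $\mathfrak{h}$. Conjugating by a representative of $W(\mathfrak{l}_T)$ in $N_{L_T}(\mathfrak{h})\subset P_T^+$ fixes $\mathfrak{p}_T^+$ and permutes $\Phi^-\setminus\Phi_T^-$ among itself, so it can be chosen to place $\mathfrak{b}^-\cap\mathfrak{l}_T$ inside $\mathfrak{p}'\cap\mathfrak{l}_T$; then $\mathfrak{b}^-\subset\mathfrak{p}'$ and $\mathfrak{p}'=\mathfrak{p}_S^-$ as desired.
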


As a consequence, it suffices to consider standard seaweeds when proving the existence of Richardson elements.

Let $\Phi^{+}_{S,T}=\Phi_{S}^{+} \setminus \Phi_{S\cap T}$,
$\Phi^{-}_{S,T}=\Phi_{T}^{-} \setminus \Phi_{S\cap T}$ and
$\Phi_{S,T}=\Phi_{S,T}^{+}\cup \Phi_{S,T}^{-}$.  We have
$$
\mathfrak{q}_{S,T} = \mathfrak{n}_{S,T}^{-} \oplus \mathfrak{l}_{S,T} \oplus \mathfrak{n}_{S,T}^{+}$$
where
$\mathfrak{n}_{S,T}^{\pm} = \bigoplus_{\alpha\in \Phi_{S,T}^{\pm}} \mathfrak{g}_{\alpha}$ 
and $\mathfrak{l}_{S,T} = \mathfrak{h} \oplus \bigoplus_{\alpha\in \Phi_{S\cap T}} \mathfrak{g}_{\alpha}$.
Then $\mathfrak{l}_{S,T}$ is the Levi-subalgebra of $\mathfrak{q}_{S, T}$ and
$\mathfrak{n}_{S,T}=\mathfrak{n}_{S,T}^{+} \oplus \mathfrak{n}_{S,T}^{-}$
is the nilpotent radical of $\mathfrak{q}_{S,T}$.

{\it In the sequel, we will assume that neither $S$ or $T$ is equal to $\emptyset$ or $\Pi$}.
Note that  two algebras of the same type may have different rank and
we view $$\mathbb{B}_1=\mathbb{C}_1=\mathbb{D}_1=\mathbb{A}_1.$$
 By the {\it type of a seaweed} $\mathfrak{q}\subseteq \mathfrak{g}$ we mean the type of $\mathfrak{g}$ and thus 
 the type of $\mathfrak{q}$ is not well-defined without an embedding $\mathfrak{q}\subseteq \mathfrak{g}$.

We note the following symmetry with respect to the choice of $S$ and $T$.

\begin{lemma} \label{symmetry} The seaweed 
$\mathfrak{q}_{S,T}$ has a Richardson element if and only if so does the seaweed $\mathfrak{q}_{T,S}$.
\end{lemma}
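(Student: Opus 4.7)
The natural strategy is to produce a Lie algebra automorphism of $\mathfrak{g}$ that swaps $\mathfrak{q}_{S,T}$ with $\mathfrak{q}_{T,S}$ and carries nilpotent radical to nilpotent radical. The obvious candidate is the Chevalley involution $\sigma$: an involutive automorphism of $\mathfrak{g}$ that acts as $-\mathrm{id}$ on $\mathfrak{h}$ and sends each root space $\mathfrak{g}_{\alpha}$ to $\mathfrak{g}_{-\alpha}$. Since a Richardson condition is expressed entirely in terms of the bracket, the nilpotent radical, and the seaweed itself, transporting any Richardson element through $\sigma$ will yield the desired element on the other side.

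First I would verify $\sigma(\mathfrak{p}_S^{\pm}) = \mathfrak{p}_S^{\mp}$. From the defining formula $\mathfrak{p}_S^{\pm} = \mathfrak{h} \oplus \bigoplus_{\alpha \in \Phi_S^{\mp} \cup \Phi^{\pm}} \mathfrak{g}_{\alpha}$, this reduces to the elementary identity $-(\Phi_S^{\mp} \cup \Phi^{\pm}) = \Phi_S^{\pm} \cup \Phi^{\mp}$, which is immediate because negation preserves the support of a root. Intersecting then gives $\sigma(\mathfrak{q}_{S,T}) = \sigma(\mathfrak{p}_S^- \cap \mathfrak{p}_T^+) = \mathfrak{p}_S^+ \cap \mathfrak{p}_T^- = \mathfrak{q}_{T,S}$. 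A parallel root-set check, using $\Phi_{S,T}^{+} = \Phi_{S}^{+}\setminus \Phi_{S\cap T}$ and $\Phi_{S,T}^{-} = \Phi_{T}^{-}\setminus \Phi_{S\cap T}$, shows $-\Phi_{S,T}^{\pm} = \Phi_{T,S}^{\mp}$, and hence $\sigma(\mathfrak{n}_{S,T}) = \mathfrak{n}_{T,S}$.

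Given these two equalities, the lemma is formal. If $x \in \mathfrak{n}_{S,T}$ is a Richardson element for $\mathfrak{q}_{S,T}$, then $\sigma(x) \in \mathfrak{n}_{T,S}$ and
\[ [\mathfrak{q}_{T,S}, \sigma(x)] = [\sigma(\mathfrak{q}_{S,T}), \sigma(x)] = \sigma([\mathfrak{q}_{S,T}, x]) = \sigma(\mathfrak{n}_{S,T}) = \mathfrak{n}_{T,S}, \]
so $\sigma(x)$ is Richardson for $\mathfrak{q}_{T,S}$; the converse direction is obtained by applying $\sigma^{-1}=\sigma$ again. There is no genuine obstacle here: the statement is a formal symmetry and the Chevalley involution does all the work, with the only care needed being the bookkeeping of signs on the root-set indices.
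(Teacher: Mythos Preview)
Your proof is correct and follows essentially the same approach as the paper: the paper's one-line proof simply invokes ``the involution $\mathfrak{g}\rightarrow\mathfrak{g}$ mapping $\mathfrak{g}_{\alpha}$ onto $\mathfrak{g}_{-\alpha}$,'' which is precisely the Chevalley involution you use. You have just unpacked the bookkeeping that the paper leaves implicit.
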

\begin{proof}
The lemma follows from the involution $\g\rightarrow \g$ mapping $\g_{\alpha}$ onto $\g_{-\alpha}$.
\end{proof}

\subsection{A decomposition of seaweed subalgebras and Richardson elements }

Let $\mathfrak{q}_{S, T}$ be a standard seaweed in a simple Lie algebra $\g$ of type $\mathbb{B}, \mathbb{C}$ or 
$\mathbb{D}$. Note that $$\g = (\bigoplus_{\alpha\in \Phi}\g_{\alpha})\oplus 
(\bigoplus_{\alpha\in \Pi}[\g_{\alpha},\g_{-\alpha}]),$$ and when $\alpha+\beta\not\in \Phi\cup\{0\}$,
\begin{equation}\label{Eq2} [\g_\alpha,\g_\beta]=0\tag{a} \end{equation}

Denote the positive simple roots of $\g$ by $\alpha_1, \dots, \alpha_n$, with the corresponding Dynkin graph numbered as follows. 

$$\xymatrix{\mathbb{B}: & n\ar@{-}[r] & n-1 \ar@{-}[r] & n-2 & \cdots & 3 \ar@{-}[r]  & 2 \ar@{=>}[r] & 1\;\;}$$

$$\xymatrix{\mathbb{C}: & n\ar@{-}[r] & n-1 \ar@{-}[r] & n-2 & \cdots & 3 \ar@{-}[r]  & 2  & 1\ar@{=>}[l]\;\;}$$

$$\xymatrix{&&&&&&& 1 \\ \mathbb{D}: & n\ar@{-}[r] & n-1 \ar@{-}[r] & n-2 & \cdots & 4 \ar@{-}[r]  & 3 \ar@{-}[ur] \ar@{-}[dr] \\ &&&&&&& 2}$$

{
Let $C=(c_{ij})$ be the Cartan matrix of $\g$, for instance when $\g$ is of type 
$\mathbb{B}_3$, the matrix $C=\left(\begin{matrix}   2&-1&0\\ -1&2&-2\\ 0&-1&2\end{matrix}\right)$. The Cartan matrix of  
type $\mathbb{C}_n$ is the transpose of the Cartan matrix of type $\mathbb{B}_n$.
Let $e_i, f_i, \overline{h}_i$ be the Chevalley generators of 
 $\g$. 
That is, $\g$ is generated by 
the generators subject to the relations. 
 
\begin{itemize}
\item[(1)] $[e_i, f_j]=\delta_{ij} \overline{h}_i$;
\item[(2)] $[ \overline{h}_i, e_j]=c_{ji}e_j$;
\item[(3)] $[ \overline{h}_i, f_j]=-c_{ji}f_j$.
\end{itemize}

We choose a new basis $h_1,\cdots,h_n$ of the Cartan subalgebra of $\mathfrak{g}$ as follows. 
When $\g$ is of type $\mathbb{B}$, $h_1=\frac{1}{2}\overline{h}_1$; when $\g$ is of type $\mathbb{C}$, $h_1= \overline{h}_1$; 
when $\g$ is of type $\mathbb{D}$, $h_1=\frac{1}{2}( \overline{h}_1- \overline{h}_2)$. 
For $i\geq 2$, let
$$h_i= \overline{h}_i+h_{i-1}.$$
}

\begin{lemma}\label{cartannewbasis}
If $i=2$ and $\g$ is of  type $\mathbb{D}$, then 
$$[h_i,\mathfrak{g}_{\pm\alpha_j}]=\left\{\begin{tabular}{ll} $\mathfrak{g}_{\pm\alpha_j}$ & if $j=1, 2, 3,$ \\ $0$ & otherwise.\end{tabular}\right. $$
{For all other types and for all other $i$ when $\g$ is of type $\mathbb{D}$,}
$$[h_i,\mathfrak{g}_{\pm\alpha_j}]=\left\{\begin{tabular}{ll} $\mathfrak{g}_{\pm\alpha_j}$ & if $j=i, i+1,$ \\ $0$ & otherwise.\end{tabular}\right. $$
\end{lemma}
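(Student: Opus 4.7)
Since $[\bar h_k,\g_{\pm\alpha_j}]=\pm c_{kj}\,\g_{\pm\alpha_j}$ and each $h_i$ is a linear combination of the generators $\bar h_k$, one gets $[h_i,\g_{\pm\alpha_j}]=\pm\mu_j^{(i)}\g_{\pm\alpha_j}$ for an explicit scalar $\mu_j^{(i)}$. The lemma is therefore equivalent to the claim that $\mu_j^{(i)}\ne 0$ exactly for the indices $j$ listed in the statement, and the proof reduces to an organised bookkeeping with Cartan matrix entries.

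First I would unwind the recursion $h_i=\bar h_i+h_{i-1}$ to obtain explicit formulas: in type $\mathbb{B}$, $h_i=\sum_{k=1}^{i}\bar h_k$; in type $\mathbb{C}$, $h_i=\tfrac12\bar h_1+\sum_{k=2}^{i}\bar h_k$; and in type $\mathbb{D}$, $h_1=\tfrac12(\bar h_1-\bar h_2)$ while $h_i=\tfrac12\bar h_1+\tfrac12\bar h_2+\sum_{k=3}^{i}\bar h_k$ for $i\ge 2$ (the last sum being empty when $i=2$). Next I would read off the entries of the Cartan matrices from the Dynkin diagrams drawn above. Away from the multiple edge (resp.\ the branching vertex) the diagrams are simply laced, so $c_{kk}=2$, $c_{kj}=-1$ when $|k-j|=1$, and $c_{kj}=0$ when $|k-j|\ge 2$. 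The only deviations are the pair $(c_{12},c_{21})=(-1,-2)$, $(-2,-1)$, $(0,0)$ for $\mathbb{B},\mathbb{C},\mathbb{D}$ respectively, together with $c_{13}=c_{31}=c_{23}=c_{32}=-1$ in type $\mathbb{D}$.

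With these formulas $\mu_j^{(i)}$ is a sum of at most three nonzero Cartan entries, because $c_{kj}=0$ as soon as $|k-j|\ge 2$. The case $j\ge i+2$ contributes nothing, so $\mu_j^{(i)}=0$ there. For $j=i+1$ only $c_{i,i+1}=-1$ survives, so $\mu_{i+1}^{(i)}\ne 0$. For $j\le i$ the key observation is the telescoping identity
\[
c_{j-1,j}+c_{jj}+c_{j+1,j}=-1+2-1=0
\]
valid in the simply-laced regime, which shows that $\mu_j^{(i)}=0$ for interior indices $1<j<i$ away from the exceptional vertices. The remaining boundary cases are $j=i$ (only $c_{i-1,i}+c_{ii}=1$ survives, so $\mu_i^{(i)}\ne 0$), $j=1$ (where the halved coefficient on $\bar h_1$ in types $\mathbb{C}$ and $\mathbb{D}$ is precisely what makes $\tfrac12 c_{11}+c_{21}$ vanish), and, in type $\mathbb{D}$, the branching vertex $j=2$ for $i\ge 3$ (where $\tfrac12 c_{22}+c_{32}=1-1=0$).

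Finally the distinguished case of type $\mathbb{D}$ with $i=2$ is handled by direct computation: $h_2=\tfrac12\bar h_1+\tfrac12\bar h_2$ yields $\mu_1^{(2)}=\tfrac12 c_{11}=1$, $\mu_2^{(2)}=\tfrac12 c_{22}=1$, $\mu_3^{(2)}=\tfrac12(c_{13}+c_{23})=-1$, and $\mu_j^{(2)}=0$ for $j\ge 4$, matching the asymmetric statement in the lemma. No single step is conceptually hard; the main obstacle is purely to keep track of which Cartan entries appear with coefficient $1$ versus $\tfrac12$ near the multiple-edge and the branching vertices, and to ensure that the three small cases ($i=1$ and $i=2$ in types $\mathbb{C}$ and $\mathbb{D}$) are not silently absorbed into the generic recursion.
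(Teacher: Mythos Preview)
Your approach is correct and is essentially the same as the paper's: both are direct computations with Cartan matrix entries, and you have simply written out in detail what the paper condenses into a single displayed formula. One small bookkeeping slip to fix: in type $\mathbb{D}$ with $i\ge 3$ and $j=1$, your own expression $h_i=\tfrac12\bar h_1+\tfrac12\bar h_2+\sum_{k\ge 3}\bar h_k$ gives $\mu_1^{(i)}=\tfrac12 c_{11}+\tfrac12 c_{21}+c_{31}=1+0-1=0$, not the quantity $\tfrac12 c_{11}+c_{21}$ you quote in the parenthetical (which would equal $1$, since $c_{21}=0$ in type $\mathbb{D}$); the argument itself is unaffected.
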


\begin{proof}
{Direct computation gives the following, 
$$
[h_1, e_1]=\left\{\begin{tabular}{ll}
 $2e_1$ & if $\g$ is of type $\mathbb{C}$;\\ $e_1$ &  otherwise. \\ \end{tabular}\right.
$$
For $i>1$, }
$$
[h_i, e_j]=\left\{\begin{tabular}{ll}
 $e_j$ & if $j=i;$\\ $-e_j$ & if $j=i+1;$ \\ $0$ & otherwise, \\ \end{tabular}\right.
$$

except when $i=2$ and $\g$ is of type $\mathbb{D}$, where we have 
$$
[h_2, e_j]=\left\{\begin{tabular}{ll} $e_j$ & if $j=1$ or $2;$\\ $-e_j$ & if $j=3;$ \\ $0$ & otherwise.\\ \end{tabular}\right.
$$
So the lemma follows. 
\end{proof}

Let $\mathfrak{h}_i$  be the subspace spanned by $h_i$. Let
$$\epsilon=\mathrm{min}\{i\mid \alpha_i \not\in S \} \mbox{ and  }  \eta =\mathrm{min}\{i\mid \alpha_i \not\in T \}.
$$
By Lemma \ref{symmetry}, we may assume that $\epsilon\geq \eta\geq 1$. 
Let
$$\omega=\mathrm{max}\{i\mid i\leq \epsilon, \, \alpha_i\not\in T\}. $$
We define two subspaces of $\g$, 
$$\g_1=(\bigoplus_{\alpha\in \Phi_{\{\alpha_i|i<\epsilon\}}}\g_{\alpha})\oplus \bigoplus_{i<\epsilon}\mathfrak{h}_i,$$
and
$$\g_2=(\bigoplus_{\alpha\in \Phi_{\{\alpha_i|i>\omega\}}}\g_{\alpha})\oplus \bigoplus_{i\geq\omega}\mathfrak{h}_i.
$$

\begin{lemma}
\begin{itemize}
\item[(1)] If  $\epsilon>2$ when $\g$ is of type $\mathbb{D}$ or $\epsilon>1$ for other types, 
then $\g_1$ is a Lie subalgebra of the same type as $\g$.
\item[(2)] The subspace $\g_2$ is a Lie subalgebra isomorphic to $\mathfrak{gl}_{n-\omega+1}$.
\end{itemize}
\end{lemma}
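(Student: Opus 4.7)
The plan is to realise $\g_1$ and $\g_2$ as subalgebras generated by appropriate subsets of the Chevalley generators, and then to identify their Cartan parts with $\bigoplus_{i<\epsilon}\mathfrak{h}_i$ and $\bigoplus_{i\geq\omega}\mathfrak{h}_i$ respectively, using the invertibility of the change of basis between the $\overline{h}_i$ and the $h_i$.

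For (1), let $\g_1'$ denote the Lie subalgebra of $\g$ generated by $\{e_i,f_i:1\leq i<\epsilon\}$. By Serre's relations $\g_1'$ is the (semi)simple Lie algebra attached to the sub-Dynkin diagram on the first $\epsilon-1$ nodes, and admits the root space decomposition
$$\g_1'=\bigoplus_{\alpha\in\Phi_{\{\alpha_i\mid i<\epsilon\}}}\g_\alpha\;\oplus\;\mathrm{span}(\overline{h}_1,\dots,\overline{h}_{\epsilon-1}).$$
The hypothesis $\epsilon>1$ (respectively $\epsilon>2$ in type $\mathbb{D}$) ensures that this sub-diagram retains the double bond (respectively the fork), so that $\g_1'$ has the same type as $\g$. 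To conclude $\g_1=\g_1'$ it then suffices to verify $\mathrm{span}(\overline{h}_1,\dots,\overline{h}_{\epsilon-1})=\bigoplus_{i<\epsilon}\mathfrak{h}_i$; this holds because $\overline{h}_i=h_i-h_{i-1}$ for $i\geq 2$ in types $\mathbb{B},\mathbb{C}$ (and for $i\geq 3$ in type $\mathbb{D}$), together with the explicit expressions of $\overline{h}_1,\overline{h}_2$ as linear combinations of $h_1,h_2$ in type $\mathbb{D}$, give a triangular invertible change of basis on the first $\epsilon-1$ entries.

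For (2), let $\g_2'$ denote the subalgebra generated by $\{e_i,f_i:\omega<i\leq n\}$. The sub-Dynkin diagram on $\{\alpha_{\omega+1},\dots,\alpha_n\}$ is always a chain, irrespective of type or of $\omega$, so $\g_2'\cong\mathfrak{sl}_{n-\omega+1}$ with Cartan $\mathrm{span}(\overline{h}_{\omega+1},\dots,\overline{h}_n)$, a codimension-one subspace of $\bigoplus_{i\geq\omega}\mathfrak{h}_i$. Thus $\g_2=\g_2'\oplus\mathbb{C}h_\omega$ as a vector space, and Lemma \ref{cartannewbasis} immediately gives $[h_\omega,\g_2']\subseteq\g_2'$, so $\g_2$ is a Lie subalgebra. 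The remaining step is to exhibit a central element. Setting $c:=h_\omega+h_{\omega+1}+\cdots+h_n$ and applying Lemma \ref{cartannewbasis} term by term produces a telescoping identity $\alpha(c)=x_\omega$ (or $2x_1$ in the subcase $\g$ of type $\mathbb{D}$ with $\omega=1$) for every $\alpha=\sum x_i\alpha_i\in\Phi_{\{\alpha_i\mid i>\omega\}}$, which vanishes since $x_\omega=0$. Hence $c$ is central in $\g_2$, giving $\g_2=\g_2'\oplus\mathbb{C}c\cong\mathfrak{sl}_{n-\omega+1}\oplus\mathbb{C}\cong\mathfrak{gl}_{n-\omega+1}$.

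The main obstacle is the case analysis in type $\mathbb{D}$ with $\omega\leq 2$, where $h_2$ acts non-standardly on $\g_{\pm\alpha_j}$ for $j\leq 3$; once the coefficients $\alpha_j(h_2)$ are read off from Lemma \ref{cartannewbasis}, however, both the triangularity of the change of basis in part (1) and the telescoping identity for $\alpha(c)$ in part (2) go through uniformly across all types and all admissible values of $\epsilon$ and $\omega$.
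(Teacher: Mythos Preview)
Your argument is correct and follows essentially the same approach as the paper. Both proofs rest on identifying the Cartan parts: the paper records the identities $\bigoplus_{i<\epsilon}\mathfrak{h}_i=\bigoplus_{i<\epsilon}[\g_{\alpha_i},\g_{-\alpha_i}]$ and $\bigoplus_{i\geq\omega}\mathfrak{h}_i=\bigoplus_{i>\omega}[\g_{\alpha_i},\g_{-\alpha_i}]\oplus\h_{\omega}$ and declares the lemma to follow, while you make the same triangular change of basis explicit and, in part (2), go further by exhibiting the central element $c=\sum_{i\geq\omega}h_i$ rather than leaving the passage from $\mathfrak{sl}_{n-\omega+1}\oplus\h_\omega$ to $\mathfrak{gl}_{n-\omega+1}$ implicit. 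One minor imprecision: in type $\mathbb{D}$ with $\omega=2$ the telescoping gives $\alpha(c)=x_1+x_2$ rather than $x_\omega$, but this still vanishes on $\Phi_{\{\alpha_i\mid i>\omega\}}$, so the conclusion is unaffected.
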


\begin{proof}
By the definition of $\mathfrak{h}_i$, we have 
$$\bigoplus_{i<\epsilon}\mathfrak{h}_i=\bigoplus_{i<\epsilon}[\g_{\alpha_i}, \g_{-\alpha_i}],$$
and 
$$ \bigoplus_{i\geq\omega}\mathfrak{h}_i = \bigoplus_{i>\omega}[\g_{\alpha_i}, \g_{-\alpha_i}]\oplus \h_{\omega}.
$$
So the lemma follows. 
\end{proof}

\begin{lemma} \label{specialcase}
If $\epsilon=\omega$, then $\mathfrak{q}_{S,T}$ has a Richardson element.
\end{lemma}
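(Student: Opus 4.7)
The plan is to use $\epsilon = \omega$ to write $\mathfrak{q}_{S,T}$ as a Lie-algebra direct sum of two smaller seaweeds and then produce the Richardson element piece by piece. The key observation is that $\epsilon = \omega$ forces $\alpha_\epsilon$ out of both $S$ and $T$: out of $S$ by the definition of $\epsilon$, and out of $T$ by the definition of $\omega$ together with $\omega = \epsilon$. Because the Dynkin diagram of $\mathfrak{g}$ is connected only through $\alpha_\epsilon$, every $\alpha \in \Phi_S \cup \Phi_T$ (hence every root appearing in $\mathfrak{q}_{S,T}$) has support either entirely on $\{\alpha_i : i < \epsilon\}$ or entirely on $\{\alpha_i : i > \epsilon\}$.

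Combined with the direct-sum decomposition $\mathfrak{h} = \bigoplus_i \mathfrak{h}_i$, this gives the vector-space equality $\mathfrak{q}_{S,T} = \mathfrak{q}_1 \oplus \mathfrak{q}_2$ with $\mathfrak{q}_j := \mathfrak{q}_{S,T} \cap \mathfrak{g}_j$, and similarly $\mathfrak{n}_{S,T} = \mathfrak{n}_1 \oplus \mathfrak{n}_2$. To upgrade this to a Lie-algebra direct sum I would verify $[\mathfrak{q}_1, \mathfrak{q}_2] = 0$: brackets of root spaces vanish by $(*)$, since the sum of a root supported on $\{\alpha_i : i < \epsilon\}$ with one supported on $\{\alpha_i : i > \epsilon\}$ has support on two disjoint components of $\Pi \setminus \{\alpha_\epsilon\}$ and therefore cannot itself be a root; brackets between Cartan pieces and root spaces vanish by Lemma \ref{cartannewbasis}, since $h_i$ with $i < \epsilon$ annihilates $\mathfrak{g}_{\pm\alpha_j}$ for every $j > \epsilon$, and symmetrically.

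The previous lemma identifies $\mathfrak{q}_1$ as a standard seaweed in a classical Lie algebra $\mathfrak{g}_1$ of the same type as $\mathfrak{g}$ but of strictly smaller rank, and $\mathfrak{q}_2$ as a standard seaweed in $\mathfrak{g}_2 \cong \mathfrak{gl}_{n-\omega+1}$, which is of type $\mathbb{A}$. Induction on rank yields $x_1 \in \mathfrak{n}_1$ with $[\mathfrak{q}_1, x_1] = \mathfrak{n}_1$, and \cite{JSY} provides $x_2 \in \mathfrak{n}_2$ with $[\mathfrak{q}_2, x_2] = \mathfrak{n}_2$. The commutation of $\mathfrak{q}_1$ and $\mathfrak{q}_2$ then gives
\[
[\mathfrak{q}_{S,T}, x_1 + x_2] = [\mathfrak{q}_1, x_1] + [\mathfrak{q}_2, x_2] = \mathfrak{n}_1 \oplus \mathfrak{n}_2 = \mathfrak{n}_{S,T},
\]
so $x_1 + x_2$ is a Richardson element. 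The main subtlety lies in the degenerate small-$\epsilon$ cases, namely $\epsilon = 1$ in any type and $\epsilon \le 2$ in type $\mathbb{D}$, where $\mathfrak{g}_1$ is either trivial or fails to be of the same classical type as $\mathfrak{g}$, so the previous lemma does not apply verbatim; however, in each such case $\alpha_\epsilon \notin S \cup T$ together with the shape of the $\mathbb{B}/\mathbb{C}/\mathbb{D}$ diagrams forces the full seaweed to sit inside a type-$\mathbb{A}$ subalgebra of $\mathfrak{g}$, and \cite{JSY} then applies directly.
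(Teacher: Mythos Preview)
Your argument follows the paper's approach almost exactly: split $\mathfrak{q}_{S,T}$ as $\mathfrak{q}_1\oplus\mathfrak{q}_2$ along the missing node $\alpha_\epsilon$, verify $[\mathfrak{q}_1,\mathfrak{q}_2]=0$, and combine Richardson elements. Your treatment of the degenerate small-$\epsilon$ cases also matches the paper's.

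The one substantive difference is how you obtain a Richardson element in $\mathfrak{q}_1$. You invoke ``induction on rank'', i.e.\ the main theorem for seaweeds of type $\mathbb{B}$, $\mathbb{C}$, $\mathbb{D}$ in smaller rank. But this lemma is a \emph{preliminary} step in the proof of that very theorem, so as written your induction is circular unless the entire paper is reorganised as an induction on rank. The paper sidesteps this entirely by observing something you overlooked: since $\epsilon=\min\{i:\alpha_i\notin S\}$, we have $\{\alpha_1,\dots,\alpha_{\epsilon-1}\}\subseteq S$, so $S'$ equals the full set of simple roots of $\mathfrak{g}_1$. Hence $\mathfrak{q}_1$ is not merely a seaweed but a \emph{parabolic} subalgebra of $\mathfrak{g}_1$, and Richardson's classical theorem applies directly. (The paper also singles out the boundary case $\epsilon=\omega=n$, where $\mathfrak{q}_{S,T}$ itself is already a parabolic of the same type as $\mathfrak{g}$.) With this observation in place of your inductive appeal, your proof is complete and coincides with the paper's.
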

\begin{proof}
If $\omega=\epsilon=1$, then the root spaces in $\mathfrak{q}_{S,T}$ are supported on a   
subgraph of type $\mathbb{A}$, and so $\mathfrak{q}_{S,T}$ is isomorphic to a seaweed
of type $\mathbb{A}$. Similarly, if 
$\omega=\epsilon=2$ and $\mathfrak{g}$ is of type $\mathbb{D}$, then
$\mathfrak{q}_{S,T}$ is also isomorphic to a seaweed of type $\mathbb{A}$.
If $\omega=\epsilon=n$, then $\mathfrak{q}_{S,T}$ is  a reductive Lie algebra 
of the same type as $\mathfrak{g}$ with the nilpotent radical $0$ and thus obviously $0$
is the Richardson element. In the first two cases, by Theorem 1.2 in 
\cite{JSY} or Richardson's Theorem \cite{richardson}, $\mathfrak{q}_{S,T}$ 
has a Richardson element.

In all the other cases, $$\mathfrak{q}_{S,T}=\mathfrak{q}_2\oplus \mathfrak{q}_1,$$ 
where $\mathfrak{q}_1\subseteq \mathfrak{g}_1$ is parabolic of the same type as $\mathfrak{g}$, and
$\mathfrak{q}_2\subseteq \mathfrak{g}_2$ is a seaweed of type $\mathbb{A}$. Further, by Equation (\ref{Eq2}),
$$[\g_1, \g_2]=0 \mbox{ and so }[\mathfrak{q}_1,\mathfrak{q}_2]=0.$$
Since both $\mathfrak{q}_1$ and $\mathfrak{q}_2$ have Richardson elements,  we can conclude that $\mathfrak{q}_{S,T}$
has a Richardson element.
\end{proof}

{\it Consequently we may assume that $\epsilon>\omega$ for the remainder of the paper. 
When $\mathfrak{g}$ is of type $\mathbb{D}$, we
aslo assume  $(\epsilon, \omega)\not= (2, 1)$}, in which case we prove separately the existence of Richardson elements in Theorem \ref{lemmam2}.   
Let
$$
S''=\{\alpha_i\in S\mid i>\omega \}, \;\;T''=\{\alpha_i\in T\mid i>\omega\},$$ 
$$S'= \{\alpha_i\in S\mid i<\epsilon \} \mbox{ and } T'=\{\alpha_i\in T\mid i<\epsilon\}.$$ 
Note that by the defintion of $\epsilon$, $S'$ contains all the simple roots $\alpha_i$ with $i<\epsilon$. These subsets determine two subalgebras of $\mathfrak{q}_{S, T}$, 
 the positive parabolic subalgebra $\c_{S, T}$ of $\g_1$ determined by $T'$ and  the seaweed Lie subalgebra 
$\a_{S, T}$ of $\g_2$ determined by $S''$ and $T''$.

\begin{example}
Let $\g$ be a Lie algebra of type $\mathbb{D}_6$, 
$S=\{\alpha_5, \alpha_3, \alpha_2, \alpha_1\}$ and $T=\{\alpha_6, \alpha_4, \alpha_2, \alpha_1\}$. Then 
$\epsilon=4$, $\omega=\eta=3$. The subalgebras $\a_{S, T}$ and $\c_{S, T}$ can for instance be described using matrices as follows, where $\a_{S, T}$ 
is marked by $\ast$ and $\dagger$, and  $\c_{S, T}$ is marked by $\star$ and $\dagger$. The {one} dimensional intersection is marked by $\dagger$, 
and there is an anti-symmetry to the anti-diagonal.

$$\left(\begin{tabular}{llllllllllll}
$*$&  &  &  &  &  &  &  &  &  & & \\
$*$&$*$&$*$& &  &  &  &  &  &  &  &  \\
 & &$*$& &  &  &  &  &  &  &  &  \\
 & &$*$&$\dagger$& $ \star$ & $ \star$ &$ \star$ & $ \star$ &$ \star$ &  &  & \\
 & & &  &${ \star}$ & $ \star$ &$ \star$ & $ \star$ &$ \star$ &  & & \\
 & & &  &$ \star$ & $ \star$ &$ \star$ & $ \star$ &$ \star$ &  & & \\
 & & & &$ \star$ & $ \star$ &$ \star$ & $ \star$ &$ \star$ &  & & \\
 & & &  &$ \star$ & $ \star$ &$ \star$ & ${ \star}$ &$ \star$ &  & & \\
 &  & & & & & & & $\dagger$ &  & & \\
 &  & & & & & & &$*$&$*$&$*$& \\
 &  & & & & & & & & &$*$& \\
 &  & & & & & & & & &$*$&$*$\\
\end{tabular}\right)
$$
\end{example}

\vspace{3mm}

 Let $\mathfrak{l}=\g_2\cap \g_1$. 
Then
$$\mathfrak{l}=(\bigoplus_{\alpha\in \Phi_{\{\alpha_i|\epsilon>i>\omega\}}} 
\g_\alpha)\oplus \bigoplus_{\epsilon> i\geq\omega} \h_i.
$$ 
Let $\mathfrak{n}_{\a}$ and $\mathfrak{n}_{\c}$
be the nilpotent radicals of $\a_{S,T}$ and $\c_{S,T}$, respectively.

\begin{lemma}\label{decomposition}
Assume $(\epsilon,\omega)\neq (2,1)$ if $\mathfrak{g}$ is of type $\mathbb{D}$. Then
\begin{itemize}
\item[(1)] $\mathfrak{q}_{S,T}=\a_{S,T}+\c_{S,T}$ 
\item[(2)] $\a_{S,T}\cap \c_{S,T}=\mathfrak{l}$ is a block in the Levi subalgebra of $\mathfrak{q}_{S, T}$. 
\item[(3)] $\mathfrak{n}_{S, T}=\mathfrak{n}_{\a}\oplus \mathfrak{n}_{\c}$
\end{itemize}
\end{lemma}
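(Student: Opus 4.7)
The plan is to prove each of the three statements by a direct root-space analysis, relying on the fact that the support of any root of $\g$ is connected in the Dynkin diagram. The two combinatorial ingredients I will use repeatedly are that $\alpha_\epsilon\notin S$ (so positive roots of $\mathfrak{q}_{S,T}$ have support avoiding $\alpha_\epsilon$) and $\alpha_\omega\notin T$ (so negative roots of $\mathfrak{q}_{S,T}$ have support avoiding $\alpha_\omega$); combined with $\omega<\epsilon$, these facts will segregate each root space into the $\g_1$-side or the $\g_2$-side.

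For (1), the Cartan part $\h=\bigoplus_{i=1}^n\h_i$ is covered because $\bigoplus_{i<\epsilon}\h_i\subseteq\c_{S,T}$ and $\bigoplus_{i\geq\omega}\h_i\subseteq\a_{S,T}$ together use all indices once $\omega<\epsilon$. For a positive root $\alpha\in\Phi_S^+$, connectedness of the support together with $\alpha_\epsilon\notin S$ forces $\supp(\alpha)\subseteq\{\alpha_i:i<\epsilon\}$, so $\g_\alpha\subseteq\g_1$ sits inside $\c_{S,T}$ as a positive root space of the parabolic, or $\supp(\alpha)\subseteq\{\alpha_i:i>\epsilon\}\cap S\subseteq S''$, so $\g_\alpha\subseteq\g_2$ sits inside $\a_{S,T}$. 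The negative root spaces are handled symmetrically using $\alpha_\omega\notin T$.

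For (2), the inclusion $\a_{S,T}\cap\c_{S,T}\subseteq\g_1\cap\g_2=\l$ is immediate from $\c_{S,T}\subseteq\g_1$ and $\a_{S,T}\subseteq\g_2$. The reverse inclusion reduces to verifying that each root space in $\l$, supported in $\{\alpha_i:\omega<i<\epsilon\}\subseteq S\cap T$, belongs to both Levi pieces, since its support lies simultaneously inside $T'$ and inside $S''\cap T''$; the Cartan piece $\bigoplus_{\omega\leq i<\epsilon}\h_i$ is shared by both subalgebras by construction. That $\l$ is a block of the Levi of $\mathfrak{q}_{S,T}$ will then follow because $\alpha_\omega\notin T$ and $\alpha_\epsilon\notin S$ isolate $\{\alpha_i:\omega<i<\epsilon\}$ as a connected component of $S\cap T$ in the Dynkin diagram. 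For (3), the same case-split partitions positive root spaces of $\mathfrak{n}_{S,T}$ into $\mathfrak{n}_\c$ (positive roots of $\g_1$ whose support is not contained in $T'$) and $\mathfrak{n}_\a$ (positive roots of $\g_2$ supported in $S''$ but not in $S''\cap T''$), and places every negative root space of $\mathfrak{n}_{S,T}$ inside $\mathfrak{n}_\a$, since the parabolic $\c_{S,T}$ has no negative root spaces in its nilradical. Directness is then immediate because the only potential overlap would involve roots with support inside $\{\alpha_i:\omega<i<\epsilon\}$, but these sit in the Levi of both subalgebras, not in their nilpotent radicals.

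The main technical care will be required in type $\mathbb{D}$, where the branching at $\alpha_3$ means that removing a single simple root does not always disconnect the Dynkin subdiagram, so the support-connectedness split is less automatic. The excluded case $(\epsilon,\omega)=(2,1)$ is precisely the degenerate configuration in which $\g_1$ collapses and the decomposition is instead handled by Lemma \ref{lemmam2}; once it is set aside, the arguments above apply uniformly across types $\mathbb{B}$, $\mathbb{C}$ and $\mathbb{D}$.
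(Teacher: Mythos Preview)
Your proposal is correct and follows essentially the same approach as the paper: both argue root-space by root-space, using that positive roots of $\mathfrak{q}_{S,T}$ have support avoiding $\alpha_\epsilon$ (hence lie entirely in $\{\alpha_i:i<\epsilon\}$ or $\{\alpha_i:i>\epsilon\}$ by connectedness) while negative roots avoid $\alpha_\omega$, and that the Cartan pieces of $\a_{S,T}$ and $\c_{S,T}$ together exhaust $\h$ since $\omega<\epsilon$. Your write-up is more explicit for parts (2) and (3), where the paper simply asserts these follow ``from the construction'' and ``from (1) and (2)'', but the underlying argument is identical.
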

\begin{proof}
(1) Let $\alpha$ be a positive root such that $\g_\alpha\subseteq \mathfrak{q}_{S,T}$. Then $\alpha$ is not supported at
$\alpha_\epsilon$, since $\epsilon\not\in S$. By the assumption on the root system,  $\alpha$ must be supported on simple roots
$\alpha_i$ with all $i<\epsilon$ or all $i>\epsilon$. So $\g_\alpha\subseteq \a_{S,T}$ or $\g_{\alpha}\subseteq \c_{S,T}$.
Similarly, a negative root $\beta$ with $\g_{\beta}\subseteq \mathfrak{q}_{S,T}$ is supported on 
simple roots $-\alpha_i$ with all $i<\omega$ or all $i>\omega$, and so $\g_\beta\subseteq \a_{S,T}$ or $\g_{\beta}\subseteq \c_{S,T}$.
By construction,  $[\g_{\alpha_i},\g_{-\alpha_i}]\subseteq \a_{S,T} + \c_{S,T}$ for all simple roots $\alpha_i$, and so $\mathfrak{q}_{S,T}=\a_{S,T}+\c_{S,T}$.

(2) follows from the construction and 
(3) follows from (1) and (2).
\end{proof}

By Theorem 1.2 in \cite{JSY},  Richardson elements exist in 
$\a_{S, T}$.
Let $r_2\in \a_{S, T}$ be a Richardson element and denote by $$\mathrm{stab}_{\mathfrak{a}_{S,T}}(r_2)=
\{x\in \mathfrak{a}_{S,T}|[x,r_2]=0\}, $$ the stabiliser of $r_2$ in 
$\mathfrak{a}_{S,T}$.
For a subalgebra $\mathfrak{u}\subseteq \g$ given 
as a direct sum of root spaces and subspaces  $\mathfrak{h}_i$ 
let $x_{|\mathfrak{u}}$ be the canonical projection of $x\in \g$ onto $\mathfrak{u}$. 
Let $$\c_{r_2}=
\{x\in \c_{S,T} | x_{|\mathfrak{l}} = y_{|\mathfrak{l}} \mbox{ for some } y\in \mathrm{stab}_{\a_{S,T}}(r_2)\}.$$

\begin{lemma} \label{lemma2} Assume $(\epsilon,\omega)\neq (2,1)$ if $\mathfrak{g}$ is of type $\mathbb{D}$ and let $r_1\in \mathfrak{n}_{\mathfrak{c}}$.
If $[\c_{r_2},r_1]=\mathfrak{n}_{\mathfrak{c}}$, then $r_1+r_2$ is a Richardson element of the seaweed $\mathfrak{q}_{S,T}$.
\end{lemma}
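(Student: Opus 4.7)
My plan is to construct, for any $w = w_{\a} + w_{\c} \in \mathfrak{n}_\a \oplus \mathfrak{n}_\c = \mathfrak{n}_{S,T}$ (using Lemma~\ref{decomposition}(3)), an explicit element $x' \in \mathfrak{q}_{S,T}$ with $[x', r_1 + r_2] = w$; the reverse containment $[\mathfrak{q}_{S,T}, r_1 + r_2] \subseteq \mathfrak{n}_{S,T}$ is immediate since $\mathfrak{n}_{S,T}$ is an ideal in $\mathfrak{q}_{S,T}$. The two primary inputs will be the Richardson property $[\a_{S,T}, r_2] = \mathfrak{n}_\a$ from \cite{JSY} and the hypothesis $[\c_{r_2}, r_1] = \mathfrak{n}_\c$.

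The technical heart of the argument will be two cross-bracket vanishing claims. Writing $a = a_{|\mathfrak{l}} + a_0$ for $a \in \a_{S,T}$ (its projection onto $\mathfrak{l}$ plus the complementary part) and $c = c_{|\mathfrak{l}} + c_0$ for $c \in \c_{S,T}$, I plan to prove
\[
[a_0, r_1] = 0 \quad \text{and} \quad [c_0, r_2] = 0.
\]
The Cartan components of $a_0$ are $h_i$ with $i \geq \epsilon$ and of $c_0$ are $h_i$ with $i < \omega$, so Lemma~\ref{cartannewbasis} yields $[h_i, r_1] = 0$ and $[h_j, r_2] = 0$ respectively (the exclusion $(\epsilon, \omega) \neq (2, 1)$ in type $\mathbb{D}$ forces $\epsilon > 2$ and avoids the $h_2$-exception). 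For root-space components, since $\alpha_\epsilon \notin S$ disconnects $S''$ and $T''$ at $\epsilon$, the positive-root supports in $a_0$ lie in $\{i \geq \epsilon + 1\}$ and the negative-root supports contain $\alpha_\epsilon$ or lie in $\{i \geq \epsilon + 1\}$; dually, the root-space components of $c_0$ are supported in $\{i < \epsilon\}$ touching $\{i \leq \omega\}$. Any bracket $[e_\gamma, e_\beta]$ with $e_\gamma$ in $a_0$ and $e_\beta$ in $r_1$ that is nonzero would yield an element of a root space $\g_{\gamma + \beta}$ whose support either crosses $\alpha_\epsilon \notin S$ (blocking a positive-root landing in $\mathfrak{q}_{S,T}$) or inherits from $r_1 \in \mathfrak{n}_\c$ an index outside $T$ (blocking a negative-root landing in $\mathfrak{q}_{S,T}$, using $\omega \notin T$); since the bracket must lie in $\mathfrak{q}_{S,T}$, it therefore vanishes. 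A symmetric argument handles $[c_0, r_2]$.

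Given these vanishing claims, I will construct $x'$ explicitly. First pick $a \in \a_{S,T}$ with $[a, r_2] = w_\a$ using the Richardson property, and set $\ell := [a_{|\mathfrak{l}}, r_1] \in [\mathfrak{l}, r_1] \subseteq \mathfrak{n}_\c$. Next, by the hypothesis pick $c \in \c_{r_2}$ with $[c, r_1] = w_\c - \ell$, and by the definition of $\c_{r_2}$ choose $y \in \mathrm{stab}_{\a_{S,T}}(r_2)$ with $y_{|\mathfrak{l}} = c_{|\mathfrak{l}}$. Set $x' := c_0 + y + a \in \mathfrak{q}_{S,T}$. Expanding $[x', r_1 + r_2]$ and applying the vanishing claims (including $[y_0, r_1] = 0$, a special case of $[a_0, r_1] = 0$), together with $[y, r_2] = 0$, $y_{|\mathfrak{l}} = c_{|\mathfrak{l}}$, and $[a, r_2] = w_\a$, gives
\[
[x', r_1 + r_2] = [c_0, r_1] + [y_{|\mathfrak{l}}, r_1] + [a_{|\mathfrak{l}}, r_1] + w_\a = [c, r_1] + \ell + w_\a = w_\c + w_\a = w.
\]
The main obstacle will be the case analysis in the vanishing claims, in particular handling negative roots of $r_2$ whose support contains $\alpha_\epsilon$ and the type $\mathbb{D}$ branching near node $3$.
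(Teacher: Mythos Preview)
Your proof is correct and follows essentially the same route as the paper: your element $x' = c_0 + y + a$ coincides (since $y_{|\mathfrak{l}} = c_{|\mathfrak{l}}$) with the paper's $y_{\a} + z' + y_{\c}$, and your cross-bracket vanishing claims $[a_0,r_1]=0$, $[c_0,r_2]=0$ are exactly the paper's equation~(\ref{Eq1}) and its analogue. The only cosmetic difference is that the paper justifies the vanishing directly via property~$(*)$ (showing $\gamma+\beta\notin\Phi$ by a support argument), whereas you phrase it through the subalgebra property of $\mathfrak{q}_{S,T}$.
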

\begin{proof}
Assume $[\c_{r_2},r_1]=\mathfrak{n}_{\mathfrak{c}}$. Take any $(x_{\a},x_{\c})\in 
\mathfrak{n}_{\a}\oplus \mathfrak{n}_{\c}$. There exists $y_{\a}\in \a_{S,T}$ 
such that $$[y_\a,r_2]=x_\a.$$ Write $y_{\a}=y'_{\a}+(y_{\a})_{|\l}$. 
Note that $r_1|_{\g_{\alpha}}\not= 0$ can occur only for  positive roots $\alpha$ with support contained in $\{\alpha_{\epsilon-1}, \dots, \alpha_1\}$,
and $y_{\mathfrak{a}}'|_{\mathfrak{g}_{\beta}}\not= 0$ can occur only for positive roots $\beta$ with support contained in $\{\alpha_n, \dots, 
\alpha_{\epsilon+1}\}$, 
or negative roots with support contained in $\{\alpha_n, \dots, \alpha_{\omega+1}\}$ and containing at least one $\alpha_j$ for some $j\geq \epsilon$. 
So  by Equation (\ref{Eq2}) in Section 2.2 and the fact from  that 
$$[\h_i, g_{\alpha_j}]=0 \text{ for } i\geq \epsilon \text{ and } j<\epsilon$$ by
Lemma \ref{cartannewbasis}, we have
\begin{equation}\label{Eq1}[y'_{\a},r_1]=0\tag{b}\end{equation}
and so 
$$[y_{\a},r_1]=[(y_{\a})_{|\l},r_1]\in \mathfrak{n}_{\mathfrak{c}}.$$

Let $y_\c\in \c_{r_2}$ be such that
$$[y_\c,r_1]=x_\c-[y_\a,r_1].$$
Let $z\in \mathrm{stab}_{\a_{S,T}}(r_2)$ with $z_{|\mathfrak{l}}=(y_{\c})_{|\mathfrak{l}}$ and $z'=z-z_{|\mathfrak{l}}$.
Then similar to Equation (\ref{Eq1}), 
$$[z',r_1]=0$$
and $$[y_\c-{(y_\c)}_{|\mathfrak{l}},\; r_2]=0.$$
Therefore
$$\aligned
&\; [y_\a+z'+y_\c,r_1+r_2 ] \\
=&\; [y_\a,r_1+r_2]+[z',r_1+r_2]+[y_\c, r_1+r_2]\\
=&\; x_\a+[y_\a,r_1] + [z',r_2] + [z_{|\mathfrak{l}},r_2]+x_\c-[y_\a,r_1]\\
=& \;x_\a+x_\c+[z,r_2]\\
=&\; x_\a+x_\c.
\endaligned$$
This completes the proof of the lemma.
\end{proof}

\subsection{A decomposition of parabolic subalgebras and Richardson elements}\label{generalassumption}   
The main goal in this subsection is to present a key sufficient condition for the existence of Richardson elements in general, except the two special cases, i.e. when $\epsilon=\omega$ as in Lemma \ref{specialcase} and when 
$(\epsilon,\omega)= (2,1)$ in type $\mathbb{D}$ . We transfer the existence to a local problem between a parabolic subalgebra constructed from a given seaweed Lie algebra and 
the seaweed Lie algebra itself. 
We first give a decomposition of parabolic subalgebras, discuss properties of subalgebras in the decomposition and then state and prove the sufficient condition at the end of the section.

Let $S, \,T,\,\epsilon$, $\omega$, $\mathfrak{g}_1$, $\mathfrak{g}_2$ and $\mathfrak{l}$ be defined as in Section 2.2 with $\omega<\epsilon$. 
{When $\g$ is of type $\mathbb{D}$, we  continue to assume that  $$ (i) \;\; (\epsilon,\omega)\neq (2,1).$$ Further, we assume that $$(ii)\;\; \eta\not=2 \text{ when } \epsilon>2.$$ The assumption $(ii)$ is purely a technical issue, to avoid a complication in the description of the decomposition 
discussed in this subsection and it does not compromise the completeness of the existence of Richardson elements 
for $\mathfrak{q}_{S, T}$ with $(\epsilon, \omega)\not=(2, 1)$, due to the symmetry between $\alpha_1$ and $\alpha_2$ when $\g$ is 
of type $\mathbb{D}$. {\it In the remaining of this section we assume  $\mathfrak{q}_{S, T}$ satisfy both (i) and (ii)}. }

Let $\mathfrak{g}'$ be a Lie algebra of the same type as $\mathfrak{g}$, with 
rank at least $\epsilon$ and root system denoted by $\Phi'$. We may assume that 
both $\mathfrak{g}$ and $\mathfrak{g'}$
are subalgebras of a Lie algebra of the same type as $\mathfrak{g}$ such that
$\mathfrak{g}\subseteq \mathfrak{g'}$ or $\mathfrak{g'}\subseteq \mathfrak{g}$. 
Here all inclusions are induced by inclusions of Dynkin diagrams.

Let $\mathfrak{p}^+_U\subseteq \g'$ be the standard parabolic subalgebra determined by $U$
with $\epsilon\not\in U$ and $$\{\alpha_i|i<\epsilon\}\cap U=\{\alpha_i|i<\epsilon\}\cap T.$$ 
We choose a basis $\{\h'_i\}_i$ for the Cartan subalgebra $\mathfrak{h}'$ of $\mathfrak{g}'$ 
in the same manner as we did for the basis $\{\h_i\}_i$ of the Cartan subalgebra of $\g$.
Let $\g'_1=\g_1$ and let $\g'_2\subseteq \g'$ be defined
similarly to $\g_2\subseteq \g$, i.e. 
$$(\bigoplus_{\alpha\in \Phi_{\{\alpha_i|i>\omega\}}}\g'_{\alpha})\oplus \bigoplus_{i\geq\omega}\mathfrak{h}'_i,$$
which is of type $\mathbb{A}$.
Further, let  $U''=\{\alpha_i\in U|i>\omega\}$, $U'=\{\alpha_i\in U|i<\epsilon\}.$
These two sets determine the following standard parabolic subalgebras of $\g_2'$ and $\g_1'$,
$$\a_U = \bigoplus_{\alpha\in \Phi'^-_{U''}\cup \Phi'^+_{\{\alpha_i|i>\omega\}}} \g'_{\alpha} \oplus \bigoplus_{i\geq\omega} \h'_i\subseteq \g'_2$$ 
and $$\c_U = \bigoplus_{\alpha\in \Phi^-_{U'}\cup \Phi^+_{\{\alpha_i|i<\epsilon\}}} \g'_{\alpha} \oplus \bigoplus_{i<\epsilon}\h'_i\subseteq \g'_1.$$
Note that  $\c_{U}=\c_{S,T}$.

Let $\mathfrak{d}_U\subseteq \mathfrak{p}^+_U$ be the direct sum of all root spaces $\g_\alpha$ with $\alpha$ a positive
root such that $\g_\alpha$ is neither contained in $\a_{U}$ nor in $\c_{U}$.  Let $\mathfrak{n}'_\a$ be the nilpotent radical of $\a_U$. 
Recall that $\mathfrak{n}_\c$ is the nilpotent radical of $\c_U=\c_{S,T}$. 

{
\begin{example}\begin{itemize}\item[(1)]
Let $\g$ be a Lie algebra of type $\mathbb{D}_6$, $S=\{\alpha_6, \alpha_4, \alpha_3, \alpha_2, \alpha_1\}$ and 
$T=\{\alpha_6, \alpha_5, \alpha_4, \alpha_2\}$. 
Then $\epsilon=5$, $\omega=3, \eta=1$. The subalgebras $\a_{S, T}$ and $\c_{S, T}$ are as below, marked by $\ast, \dagger$ and 
$\dagger, \star$, respectively, where the intersection is marked by $\dagger$.
$$ \mathfrak{q}_{S, T}=
\left(\begin{tabular}{llllllllllll} 
$*$&$*$  &  &  &  &  &  &  &  &  & & \\
$*$&$*$&$$& &  &  &  &  &  &  &  &  \\
$*$& $*$&$\dagger$& $\dagger$ & $\star$ & $\star$ &$\star$  & $\star$ & $\star$ &$\star$  &  &  \\
$*$ &$*$ &$\dagger$&$\dagger$& $\star$& $\star$ &$\star$  & $\star$ & $\star$ &$\star$  &  &  \\
 & & &  &$\star$ &  $\star$&$ \star$ & $ \star$ & $\star$ &$\star$  & & \\
 & & &  &$\star$ & $\star$ &$ \star$ & $ \star$ &$ \star$ &$\star$  & & \\
 & & & & &   &$ \star$ & $ \star$ &$ \star$ &$\star$  & & \\
 & & &  &  &   &$ \star$ & ${ \star}$ &$ \star$ &$\star$  & & \\
 &  & & & & & & & $\dagger$ &  $\dagger$& & \\
 &  & & & & & & &$\dagger$&$\dagger$&& \\
 &  & & & & & & &$*$ &$*$ &$*$& $*$\\
 &  & & & & & & & $*$& $*$&$*$&$*$\\
\end{tabular}\right)
$$
\item[(2)] Let $\g'=\g$ and $U=\{\alpha_6, \alpha_4, \alpha_2\}$, which satisfies the conditions 
$$\alpha_{\epsilon}\not\in U \;\text{ and }\; U\cap \{\alpha_i\mid i<\epsilon \}=T\cap \{\alpha_i\mid i<\epsilon \}=\{ \alpha_4, \alpha_2\}.$$
The subalgebras $\mathfrak{a}_U$ marked by $\ast$ and $\dagger$,  $\mathfrak{c}_U$ by $\dagger$ and $\star$, and 
$\mathfrak{d}_U$ by $-$ are as below.

$$
\mathfrak{p}_{U}=
\left(\begin{tabular}{llllllllllll} 
$*$&$*$  & $*$ &$*$  & $-$ & $-$   & $-$   &  $-$  & $-$   &  $-$  & $-$  &  $-$ \\
$*$&$*$&$*$& $*$&  $-$ & $-$   & $-$   &  $-$  & $-$   &  $-$  & $-$  &  $-$   \\
&  &$\dagger$& $\dagger$ & $\star$ & $\star$ &$\star$  & $\star$ & $\star$ &$\star$  & $-$  &  $-$ \\
&  &$\dagger$&$\dagger$& $\star$& $\star$ &$\star$  & $\star$ & $\star$ &$\star$  & $-$  &  $-$ \\
 & & &  &$\star$ &  $\star$&$ \star$ & $ \star$ & $\star$ &$\star$  & $-$ &  $-$\\
 & & &  &$\star$ & $\star$ &$ \star$ & $ \star$ &$ \star$ &$\star$  & $-$ &  $-$\\
 & & & & &   &$ \star$ & $ \star$ &$ \star$ &$\star$  & $-$ & $-$ \\
 & & &  &  &   &$ \star$ & ${ \star}$ &$ \star$ &$\star$  &  $-$& $-$ \\
 &  & & & & & & & $\dagger$ &  $\dagger$& $*$&$*$ \\
 &  & & & & & & &$\dagger$&$\dagger$&$*$& $*$\\
 &  & & & & & & &  &  &$*$& $*$\\
 &  & & & & & & &  &  &$*$&$*$\\
\end{tabular}\right),
$$
\item[(3)] $\mathfrak{a}_{S, T}\cap \mathfrak{c}_{S, T}=\mathfrak{a}_{U}\cap \mathfrak{c}_{U}$, marked by $\dagger$, and 
$\mathfrak{c}_{S, T}=\mathfrak{c}_U$. 
Note that in both (1) and (2) there is an anti-symmetry to the anti-diagonal.
\end{itemize}
\end{example}}

\begin{lemma} \label{lemmabove} The following are true. 
\begin{itemize} 
\item[(1)] $[\mathfrak{p}^+_U,\mathfrak{d}_U]\subseteq \mathfrak{d}_U$.
\item[(2)]  $[\mathfrak{p}^+_U,\mathfrak{n}'_{\mathfrak{a}}]\subseteq \mathfrak{n}'_{\mathfrak{a}}+\mathfrak{d}_U$.
\item[(3)] $\mathfrak{p}^+_U=(\a_U+\c_U)\oplus \mathfrak{d}_U$.
\item[(4)] $\a_U\cap \c_U=\mathfrak{l}$.
\item[(5)] $\mathfrak{n}_U=\mathfrak{n}'_{\mathfrak{a}}\oplus  \mathfrak{n}_{\mathfrak{c}} \oplus \mathfrak{d}_{U}$.
\end{itemize}
\end{lemma}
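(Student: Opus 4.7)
My plan is to prove each of the five statements by direct root-space analysis, leaning on two central facts: the support of any root is a connected subgraph of the Dynkin diagram, and $\epsilon \notin U$, so every negative root in $\mathfrak{p}^+_U$ has support missing $\alpha_\epsilon$.

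I would start with (4), then (3), then (5), as these are largely structural. For (4) I compute $\a_U \cap \c_U$ term by term: positive roots in the intersection must have support in $\{i : \omega < i < \epsilon\}$, matching the positive part of $\l$. For negative roots, the maximality of $\omega$ (combined with the hypothesis $U \cap \{i<\epsilon\} = T \cap \{i<\epsilon\}$) forces every $\alpha_i$ with $\omega < i < \epsilon$ to lie in $U$, so every negative root supported in $\{i : \omega < i < \epsilon\}$ lies in both $\a_U$ and $\c_U$, matching the negative part of $\l$. The Cartan pieces intersect in $\bigoplus_{\omega \leq i < \epsilon} \h'_i$, again matching $\l$. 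For (3) I classify each root space of $\mathfrak{p}^+_U$: a negative root (with support in $U$, hence avoiding $\alpha_\epsilon$) has connected support lying entirely on one side of $\alpha_\epsilon$, placing its root space in $\c_U$ or $\a_U$; a positive root splits into the three cases support $\subseteq \{i>\omega\}$ (in $\a_U$), support $\subseteq \{i<\epsilon\}$ (in $\c_U$), or support meeting both $\{i \leq \omega\}$ and $\{i \geq \epsilon\}$ (in $\mathfrak{d}_U$), and the Cartan is covered by $\a_U$ and $\c_U$ together. This shows $\mathfrak{p}^+_U = (\a_U + \c_U) + \mathfrak{d}_U$, and directness of $\mathfrak{d}_U$ against the rest is immediate since its roots have supports of a distinct combinatorial type. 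Statement (5) then drops out of (3) by restricting to the positive complement of the Levi of $\mathfrak{p}^+_U$.

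For (1) and (2), the technical core, I fix a positive root $\alpha$ with $\g_\alpha \subseteq \mathfrak{d}_U$ (respectively a root in $\n_{\a}$) and a root $\beta$ of $\mathfrak{p}^+_U$, and analyse $\g_{\alpha+\beta}$ when $\alpha+\beta$ is a root. The key observation is that $\beta$'s coefficient at $\alpha_\epsilon$ is non-negative when $\beta>0$ and zero when $\beta<0$ (since in the latter case $\mathrm{supp}(\beta) \subseteq U$ and $\epsilon \notin U$); combined with the fact that, for $\g_\alpha \subseteq \mathfrak{d}_U$, the coefficient of $\alpha$ at $\alpha_\epsilon$ is strictly positive (using connectedness of support together with the exclusion of $(\epsilon,\omega)=(2,1)$ in type $\mathbb{D}$ to force the connecting path to traverse $\alpha_\epsilon$), this makes $\alpha+\beta$ a positive root whose support contains $\alpha_\epsilon$, so $\g_{\alpha+\beta} \not\subseteq \c_U$. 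For (1), I would then show $\mathrm{supp}(\alpha+\beta)$ still meets $\{i \leq \omega\}$ by a coefficient analysis at $\alpha_\omega$, which cannot lie in $\mathrm{supp}(\beta)$ since $\alpha_\omega \notin T$ and hence $\alpha_\omega \notin U$; this rules out cancellation of the low-side contribution of $\alpha$. For (2), the right-hand side is larger, so the same coefficient-tracking immediately places $\alpha+\beta$ in $\n_{\a} + \mathfrak{d}_U$.

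The main obstacle I anticipate is the careful case analysis in type $\mathbb{D}$, where the Dynkin diagram branches at $\alpha_3$ and root supports are more intricate than intervals; the assumption $(\epsilon,\omega) \neq (2,1)$ is tailored precisely to rule out pathological configurations in which a connected support could link both ends of the diagram without traversing $\alpha_\epsilon$, and I expect this to be the single place where the uniform argument for types $\mathbb{B}$ and $\mathbb{C}$ needs a genuine type $\mathbb{D}$ adjustment.
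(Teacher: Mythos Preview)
Your approach is correct and is essentially the paper's own, only spelled out in far more detail. The paper's proof is extremely terse: it observes that $\mathfrak{d}_U$ consists precisely of the positive root spaces $\g_\alpha$ with $\alpha$ supported at both $\alpha_\epsilon$ and $\alpha_\omega$, notes that any negative root of $\mathfrak{p}^+_U$ has support in $U$ and hence avoids both $\alpha_\epsilon$ and $\alpha_\omega$, and concludes (1) (and says (2) is similar); parts (3) and (4) are declared to follow ``from the construction'' and (5) from (3) and (4). Your coefficient-tracking at $\alpha_\epsilon$ and $\alpha_\omega$, your classification of root supports into the three regions for (3), and your anticipation of the type~$\mathbb{D}$ subtlety are exactly the content underlying those one-line assertions.
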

\begin{proof}
By the construction, 
$\mathfrak{d}_U$ is the direct sum of the root spaces $\g_\alpha$ with $\alpha$ positive and 
supported at both simple roots $\alpha_\epsilon$ and $\alpha_\omega$.  For any $\g_{-\alpha}\subseteq \mathfrak{p}_U^+$ with $-\alpha$
a negative root, the root $-\alpha$ is not supported at $\alpha_{\epsilon}$ and $\alpha_{\omega}$. So (1) follows. Similar, (2) holds.

(3) and (4) follow from the construction. 
(5) follows from (3) and (4).
\end{proof}

By Richardson's theorem, there exists $$r=r_1+r'_2+r_d$$ with $(r_1,r'_2,r_d)\in \mathfrak{n}'_{\mathfrak{a}}
\oplus  \mathfrak{n}_{\mathfrak{c}} \oplus \mathfrak{d}_{U}$ such that $[\mathfrak{p}^+_U,r]=\mathfrak{n}_U$. 
By Lemma \ref{lemmabove} (1) (2), we may assume that $r_1$ is 
the Richardson element for $\mathfrak{c}_{S, T}$ from Section 2.2. 
Again by Lemma \ref{lemmabove}, we can identify
$$(\mathfrak{a}_U+\mathfrak{c}_U)=\mathfrak{p}^+_U / \mathfrak{d}_U
\mbox{ and }\mathfrak{n}'_{\mathfrak{a}}\oplus  \mathfrak{n}_{\mathfrak{c}}=\mathfrak{n}_U/\mathfrak{d}_{U}.$$
So we have a well-defined action $\mathfrak{p}^+_U$ on $\mathfrak{n}'_{\mathfrak{a}}\oplus  \mathfrak{n}_{\mathfrak{c}}$
and $$\mathfrak{n}'_{\mathfrak{a}}\oplus  \mathfrak{n}_{\mathfrak{c}}=
[\mathfrak{p}^+_U, r_1+r'_2]=[\a_U+\c_U,r_1+r'_2]$$
Let $\c_{r'_2}=
\{x\in \c_{U} | x_{|\mathfrak{l}} = y_{|\mathfrak{l}} \mbox{ for some } y\in \mathrm{stab}_{\a_U}(r'_2)\}.$

\begin{lemma} \label{lemma11} We have 
$[\c_{r'_2}, r_1]=\mathfrak{n}_{\c}$.
\end{lemma}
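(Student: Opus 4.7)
The forward inclusion $[\c_{r'_2}, r_1] \subseteq \n_{\c}$ is immediate. For the converse, my plan is to take $x_\c \in \n_\c$, invoke the preceding display $\n'_{\a} \oplus \n_\c = [\a_U + \c_U, r_1 + r'_2]$ to obtain $y_\a \in \a_U$ and $y_\c \in \c_U$ such that $[y_\a + y_\c, r_1 + r'_2] \equiv x_\c \pmod{\mathfrak{d}_U}$ (projecting to $0$ on $\n'_\a$ and to $x_\c$ on $\n_\c$), and then split $y_\a = y'_\a + (y_\a)_{|\l}$ and $y_\c = y''_\c + (y_\c)_{|\l}$ through the projection onto $\l$.

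The structural heart of the argument is the pair of vanishing statements
\[
[y'_\a, r_1] \subseteq \mathfrak{d}_U \qquad \text{and} \qquad [y''_\c, r'_2] \subseteq \mathfrak{d}_U,
\]
in the spirit of the $[y'_\a, r_1]=0$ step in the proof of Lemma \ref{lemma2}. These I propose to prove by a root-support case analysis: a root summand of $y'_\a$ has support meeting $\{\alpha_j \mid j \geq \epsilon\}$, and the maximality of $\omega$ forces $\{\alpha_i \mid \omega < i < \epsilon\} \subseteq U'$, so any root summand of $r_1$ must have its support reach $\{\alpha_i \mid i \leq \omega\}$. Contiguity of root supports in types $\mathbb{B}, \mathbb{C}, \mathbb{D}$ then shows any nonzero bracket either vanishes by property $(*)$ (disconnected support) or produces a positive root space whose support spans from $\leq \omega$ to $\geq \epsilon$ and hence lies in $\mathfrak{d}_U$; Cartan contributions are killed by Lemma \ref{cartannewbasis}. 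The second claim is entirely symmetric, interchanging the roles of $\omega, \epsilon$ and of $r_1, r'_2$.

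Once these vanishings are in place, the ideal observations $[y_\a, r'_2] \in \n'_\a$, $[y_\c, r_1] \in \n_\c$, $[(y_\a)_{|\l}, r_1] \in \n_\c$ (since $\l$ sits in the Levi of $\c_U$) and $[(y_\c)_{|\l}, r'_2] \in \n'_\a$ (since $\l \subseteq \a_U$) reduce the two component equations to
\[
[(y_\a)_{|\l}, r_1] + [y_\c, r_1] = x_\c, \qquad [y_\a, r'_2] + [(y_\c)_{|\l}, r'_2] = 0.
\]
The second identity says that $z := y_\a + (y_\c)_{|\l}$ lies in $\stab_{\a_U}(r'_2)$. Setting $\tilde y_\c := (y_\a)_{|\l} + y_\c$, I obtain $\tilde y_\c \in \c_U$ with $(\tilde y_\c)_{|\l} = z_{|\l}$, so $\tilde y_\c \in \c_{r'_2}$; the first identity then yields $[\tilde y_\c, r_1] = x_\c$.

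The main obstacle I anticipate is the uniform root-support bookkeeping for the two vanishing claims, which must be carried out across types $\mathbb{B}, \mathbb{C}, \mathbb{D}$ and both possible embeddings $\g \subseteq \g'$ or $\g' \subseteq \g$, relying on contiguity of root supports together with the constraints $\alpha_\epsilon \notin U$ and $\{\alpha_i \mid \omega < i < \epsilon\} \subseteq U$ arising from the maximality of $\omega$.
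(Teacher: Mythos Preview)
Your proposal is correct and follows essentially the same route as the paper: lift $x \in \n_\c$ via $\n'_\a \oplus \n_\c = [\a_U + \c_U, r_1 + r'_2]$, decompose through $\l$, use the vanishing of the cross-brackets to separate the $\n'_\a$ and $\n_\c$ components, and extract an element of $\c_{r'_2}$ hitting $x$. Your treatment is in fact slightly more careful than the paper's, which asserts $[y'', r_1] = 0$ and $[y', r'_2] = 0$ outright by analogy with Lemma~\ref{lemma2}, whereas (as you correctly observe) in the parabolic setting these vanishings hold only modulo $\mathfrak{d}_U$---the passage to exact equalities then being immediate because the remaining brackets lie in $\n'_\a$ or $\n_\c$, each disjoint from $\mathfrak{d}_U$.
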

\begin{proof}
Let $x\in \mathfrak{n}_\c$. There exists $y\in \a_U+\c_U$ such that $[y,r_1+r'_2]=x$.
We write $$y=y''+y_{|\mathfrak{l}}+y',$$ 
where $$y''+y_{|\mathfrak{l}}\in \a_U\mbox{ and } y_{|\mathfrak{l}}+y'\in \c_U.$$ Then  similar to Equation (\ref{Eq1}) in the proof 
of Lemma \ref{lemma2},  
$$[y'',r_1]=0, \;[y',r'_2]=0$$
and so
$$[y,r'_2]=[y''+y_{|\mathfrak{l}},r'_2]\in \mathfrak{n}'_\a \mbox{ and } [y,r_1]=[y_{|\mathfrak{l}}+y',r_1]\in \mathfrak{n}_\c.$$ Since 
$\mathfrak{n}'_\a\cap \mathfrak{n}_\c=0$ and $[y,r'_2+r_1]=x\in \mathfrak{n}_\c$, we have 
$$[y,r'_2]=[y''+y_{|\mathfrak{l}},r'_2]=0 \mbox{ and }
[y,r_1]=[y_{|\mathfrak{l}}+y',r_1]=x.$$
It follows that $y_{|\mathfrak{l}}+y'\in \c_{r'_2}$ and so 
$[\c_{r'_2}, r_1]=\mathfrak{n}_{\c}$.
\end{proof}

Recall that $r_2$ is a Richardson element for $\a_{S,T}$.
{\color{black}We have the following key observation, which gives a sufficient condition for the existence of Richardson elements}. 

\begin{lemma} \label{keylemma}
If $\mathrm{stab}_{\a_U}(r'_2)_{|\mathfrak{l}}=\mathrm{stab}_{\a_{S,T}}(r_2)_{|\mathfrak{l}}$,
then $\mathfrak{q}_{S,T}$ has a Richardson element.
\end{lemma}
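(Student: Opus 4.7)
The plan is to combine Lemma \ref{lemma11} with Lemma \ref{lemma2}, observing that the hypothesis is precisely what is needed to identify the relevant subspaces $\c_{r_2}$ and $\c_{r'_2}$.

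First I would recall that by construction $\c_U = \c_{S,T}$, so the two sets
$$\c_{r_2} = \{x\in \c_{S,T} \mid x_{|\mathfrak{l}} = y_{|\mathfrak{l}} \text{ for some } y\in \mathrm{stab}_{\a_{S,T}}(r_2)\}$$
and
$$\c_{r'_2} = \{x\in \c_U \mid x_{|\mathfrak{l}} = y_{|\mathfrak{l}} \text{ for some } y\in \mathrm{stab}_{\a_U}(r'_2)\}$$
lie in the same ambient algebra. They are determined only by the projections of the respective stabilisers onto $\mathfrak{l}$. The hypothesis $\mathrm{stab}_{\a_U}(r'_2)_{|\mathfrak{l}}=\mathrm{stab}_{\a_{S,T}}(r_2)_{|\mathfrak{l}}$ therefore gives $\c_{r_2}=\c_{r'_2}$ immediately.

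Next, Lemma \ref{lemma11} tells us that $[\c_{r'_2},r_1]=\mathfrak{n}_{\c}$. Combining this with the equality of the two subspaces yields
$$[\c_{r_2},r_1]=[\c_{r'_2},r_1]=\mathfrak{n}_{\c}.$$
This is exactly the hypothesis of Lemma \ref{lemma2}, so applying that lemma we conclude that $r_1+r_2$ is a Richardson element of $\mathfrak{q}_{S,T}$.

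The lemma is really a bookkeeping statement: the hard analytical content has already been packaged into Lemmas \ref{lemma2} and \ref{lemma11}, together with the decomposition results. There is essentially no obstacle in the present proof; the genuine difficulty appears later, in the sections verifying that the equality $\mathrm{stab}_{\a_U}(r'_2)_{|\mathfrak{l}}=\mathrm{stab}_{\a_{S,T}}(r_2)_{|\mathfrak{l}}$ actually holds after a suitable choice of Richardson elements $r_2$ and $r'_2$, which requires the categorical construction in type $\mathbb{A}$ and a careful analysis of endomorphism rings at the relevant vertices.
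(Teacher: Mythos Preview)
Your proof is correct and follows essentially the same route as the paper: from the hypothesis you deduce $\c_{r_2}=\c_{r'_2}$, then invoke Lemma~\ref{lemma11} to obtain $[\c_{r_2},r_1]=\mathfrak{n}_{\c}$, and finally apply Lemma~\ref{lemma2}. Your additional remarks on the bookkeeping nature of the lemma and the location of the real difficulty are apt.
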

\begin{proof}
Assume  $\mathrm{stab}_{\a_U}(r'_2)_{|\mathfrak{l}}=\mathrm{stab}_{\a_{S,T}}(r_2)_{|\mathfrak{l}}$.
Then $\c_{r'_2}=\c_{r_2}$ and so $[\c_{r_2},r_1]=\mathfrak{n}_{\c}$ by Lemma \ref{lemma11}.
Then $\mathfrak{q}_{S,T}$ has a Richardson element, by Lemma \ref{lemma2}.
\end{proof}

{\color{black} Verifying the condition in Lemma \ref{keylemma} is a key step in the proof of  the existence of  Richardson elements in the seaweed $\mathfrak{q}_{S,T}$. 
 We will make use of the categorical construction of Richardson elements \cite{JSY}. That is, we will analyze the properties of local endomorphisms (i.e. restrictions of endomorphisms to a vertex) of rigid modules constructed in \cite{JSY}. So we recall the construction in next section.

\section{Rigid $D$-modules and Richardson elements in type $A$}\label{section3}
In this section, we first recall a quasi-hereditary algebra $D$, which is the path algebras of a double quiver (with relations) of a quiver $Q$ of type $\mathbb{A}$,  and the construction of rigid good $D$-modules \cite{JSY}. We then explain in examples how to construct Richardson elements for the corresponding seaweed Lie algebras from the rigid modules. We remark that the type $\mathbb{A}$ quiver $Q$ can constructed from 
a given seaweed and  the relations defining the algebra $D$ can be read off from the seaweed as well \cite{JSY}. }

\subsection{The path algebra $D$ of a double quiver with relations}\label{introducingD}
Let $Q$ be a quiver of type 
$\mathbb{A}_m$ with vertices $Q_0=\{1,\cdots, m\}$  and arrows 
$$Q_1=\{\alpha_i\mid i\rightarrow i+1 \mbox{ or } i\leftarrow i+1 \mbox{ for } i=1,\cdots, m-1\}.$$
Let  $A=kQ$, the path algebra of $Q$. 
We denote the projective indecomposable $A$-module associated to vertex $i$ by $P_i$. Let 
$$P(d)=\bigoplus_{i=1}^mP_i^{d_i}$$ for any $d\in \mathbb{Z}_{\geq 0}^m$.
Note that $\mathrm{End}_AP(d)$ is a seaweed in a Lie algebra of type $\mathbb{A}$, and a Richardson elements in $\mathrm{rad End}_AP(d)$ can be 
constructed from a good rigid representation $X(d)$  \cite{JS,JSY} of a double quiver of $Q$ with relations. 
We recall the double quiver with relations from \cite{HB, HV} and the construction 
of $X(d)$. 

Let $\tilde{Q}$ be the double quiver of $Q$, i.e. $\tilde{Q}_0=Q_0$ and 
$\tilde{Q}_1=Q_1\cup Q_1^*$ with $$Q_1^*=\{\alpha^*: i\rightarrow j| 
\alpha: j\rightarrow i\in Q_1\}.$$ Let $\mathcal{I}$ be the ideal of $k\tilde{Q}$ 
generated by
$$\alpha^*\alpha - \sum_{\beta\in Q_1, t(\beta)=s(\alpha)} \beta\beta^*$$ 
for any arrow $\alpha\in Q_1$, where $s(\alpha)$ is the starting vertex of $\alpha$ and $t(\beta)$ is the terminating vertex of $\beta$; and $$\alpha^*\beta$$ for pairs of arrows
$\alpha\neq \beta$ in $Q_1$ terminating at the same vertex. Let $$D=k\tilde{Q}/\mathcal{I}.$$
Any $D$-module is an $A$-module via the inclusion $A\subseteq D$ and 
any $A$-module is a $D$-module via the surjection $D\twoheadrightarrow A$ mapping all arrows in 
$Q_1^*$ to zero. { We use the notation $_AX$ to indicate the $A$-module structure of a $D$-module $X$
and note that $$\Hom_A(M,N)=\Hom_D(M,N)$$ for two $A$-modules $M$ and $N$.}

The algebra $D$ is quasi-hereditary with Verma modules $P_1, \dots, P_m$ (see \cite{BHRR}). 
The modules filtered by the  Verma modules are called \emph{good modules}. 
So for any good  $D$-module $M$, we have $$_AM\cong P(d)$$ as $A$-modules
for some $d\in \mathbb{Z}^m_{\geq 0}$. We call $d$ 
the {\it $\Delta$-dimension vector} of $M$, denoted by $\dimv_{\Delta}M$, and the set 
$\mathrm{supp}_{\Delta}(M)=\{i\mid d_i\not=0\}$ the {\it $\Delta$-support} of $M$.
This definition is similar to the support of a module, which is defined using the usual dimension vector. 

We identify modules with the 
corresponding quiver representations. So a $D$-module $M$ is a collection of vector spaces $M_i, i\in Q_0$ and linear maps $M_\beta, \beta\in Q_1\cup Q^*_1$, satisfying the relations
$\mathcal{I}$,  and
a homomorphism $f:M\rightarrow N$ of $D$-modules is 
a collection of linear maps $(f_i)_{i\in Q_0}$ commuting with the module structure on $M$ and $N$.

Note that a $D$-module $M$ is \emph{rigid} if it has no self-extensions, i.e. $$\ext^1(M, M)=0.$$ In the remaining of this section, we 
briefly recall the construction of rigid $D$-modules and their corresponding Richardson elements \cite{JSY}. 

\subsection{Construction of rigid $D$-modules: the linear case \cite{BHRR}} 
Let $Q$ be a linear quiver with $m$ the unique sink vertex. Then $\mathcal{I}$ is generated by commutative 
relations at $2, \dots, m-1$, and a zero relation at $1$. In this case
the indecomposable projective $D$-module $R_m$, at vertex $m$, is injective.
A submodule $X$ of $R_m$ 
is uniquely determined by its $A$-structure $_{A}X\cong
\oplus^m_{i=1}P_i^{d_i}$ with $d_i \in \{0,1\}$. Thus there is 
a natural bijection between subsets $I\subseteq Q_0$ and submodules of 
$R_m$. More precisely, under this bijection a subset $I$ corresponds to the 
unique submodule $X(I)\subseteq R_m$ with $\Delta$-support $I$. 
For any vector $d\in \mathbb{Z}_{\geq 0}^m$, define $$X(d)=\sum^t_{i=1} X(I_i),$$
with $\dimv_{\Delta} X(d)=d$ 
and $I_1\subseteq I_2 \subseteq \cdots \subseteq I_t$. Then $X(d)$ is a rigid $D$-modules. 
We give an example to illustrate the construction. See \cite{BHRR} for more details.

\begin{example}\label{example2}
Let $m=3$ and $d=(2,1,2)$. The algebra $D$ is given by the quiver 
$$\xymatrix@=5mm{1 \ar@/^/[r]^{\alpha_1}
&2\ar@/^/[r]^{\alpha_2} \ar@/^/[l]^{\alpha_1^*}
& 3 \ar@/^/[l]^{\alpha_2^*} },$$ with
the ideal $\mathcal{I}$ generated by  $\alpha_1^*\alpha_1$ and
$\alpha_1\alpha_1^*-\alpha_2^*\alpha_2$.
The projective-injective D-module $R_3$ has
the following seven nonzero submodules with  the first one $R_3$,

$$\xymatrix@=3mm{ && 3\ar@{=>}[dl]_{\alpha_2^*} \\
& 2\ar@{=>}[dl]_{\alpha_1^*}\ar[dr]^{\alpha_2} & & & 2\ar@{=>}[dl]_{\alpha_1^*}\ar[dr]^{\alpha_2} \\ 1\ar[dr]_{\alpha_1}
&& 3\ar@{=>}[dl]^{\alpha_2^*} & 1\ar[dr]_{\alpha_1}  
&& 3\ar@{=>}[dl]^{\alpha_2^*} & 1\ar[dr]_{\alpha_1}  
&& 3\ar@{=>}[dl]^{\alpha_2^*}
&& 3\ar@{=>}[dl]^{\alpha_2^*} & 1\ar[dr]_{\alpha_1}\\
& 2\ar[dr]_{\alpha_2} & && 2\ar[dr]_{\alpha_2}  &&&2\ar[dr]_{\alpha_2}  
&&2\ar[dr]_{\alpha_2} &&& 2\ar[dr]_{\alpha_2}  &&
2\ar[dr]_{\alpha_2} \\ && 3,&&&3,&&&3,&&3,&&&3,&&3, & 3,
}$$
corresponding to the subsets $\{1, 2,3\}$, $\{1, 2\}$, $\{1, 3\}$, $\{2, 3\}$, 
$\{1\}$, $\{2\} $, $\{3\}$, respectively. In the picture a number $i$ indicates
a one dimensional basis element at vertex $i$
and the arrows indicate the nonzero action of the arrows in $\tilde{Q}_{1}$. 
We have $$X(d)=X(\{1, 2, 3\})\oplus X(\{1, 3\}).$$ 
\end{example}

\subsection{Construction of rigid $D$-modules: the general case \cite{JS, JSY}}
Now suppose that $Q$ has an arbitrary orientation. Recall that a vertex is admissible if it is a source or a sink. 
Let $$i_1<i_2<\cdots <i_{t-1}<i_t$$ be the complete list of interior admissible vertices 
in $Q$ and let $i_0=1$ and $i_{t+1}=m$.  
Each interval $[i_s, \dots, i_{s+1}]$ has a unique sink and a unique source. 
Similar to the linear case, 
each subset $I \subseteq \{i_s, \dots, i_{s+1}\}$ determines a unique (up to isomorphism)
indecomposable rigid good $D$-module, 
which has $\Delta$-support $I$ and is a submodule of the indecomposable projective module at the 
sink in this interval. 

Two indecomposable rigid good $D$-modules $M$ and $N$ with 
$$\supp_{\Delta}(M)\cap \supp_{\Delta}(N)=\{i_j\},$$ $\supp_{\Delta}(M)\subseteq \{i|i\leq i_j\}$ and
$\supp_{\Delta}(N)\subseteq \{i|i\geq i_j\}$, can be glued by identifying $P_{i_j}$ 
to obtain a new indecomposable rigid good $D$-module.

\begin{definition}\cite{JS}\label{order}
 Let $u$ be a vertex with $i_{v}<u\leq i_{v+1}$. 
Suppose that two indecomposable rigid $D$-modules $M$ and $N$, glued from $X(I_r)$s and $X(J_r)$s, respectively,  are supported 
(but not necessarily $\Delta$-supported) 
at $u$. We define $M\leq_u N$ if for any $s$ with both $I_s$ and $ J_s$ nonempty, 
$I_s\subseteq J_s$ when $s-v$ is even and $I_s\supseteq J_s$ when $s-v$ is odd. 
\end{definition}

{\color{black}
\begin{remark}
The order $\leq_u$ depends on the base interval, i.e. the $v$th interval $[i_v, i_{v+1}]$. When $u=i_{v+1}$ is admissible, $u$ is contained in both the $(v+1)$th interval $[i_{v+1}, i_{v+2}]$ and the $v$th one $[i_v, i_{v+1}]$. We can also define $\leq_u$ based on the $(v+1)$th interval and obtain an order that is opposite to the one defined in Definition \ref{order}. This explains  for instance in Example \ref{example3}, why the summands $M^1$ vs $M^2$ and $N^1$ vs $N^2$ are ordered the way they are. 
\end{remark}}

Now using the order $\leq_u$ we can construct rigid good $D$-modules as follows. Let $M$ and $N$ be two good rigid $D$-modules 
with $\mathrm{dim}_\Delta(M)_i=0$ for $i>i_j$,  $\mathrm{dim}_\Delta(N)_i=0$ for $i<i_j$, 
and $\mathrm{dim}_\Delta(N)_{i_j}=\mathrm{dim}_\Delta(M)_{i_j}$. 
With respect to $\leq _{i_j}$, we glue the $i$th biggest  summand of $M$  to the $i$th biggest summand of $N$. 
In this way, we obtain a rigid good $D$-module $X(d)$ for any $\Delta$-dimension vector $d$.
We illustrate the construction by an 
example. See \cite{JS,JSY} for more details.

\begin{example}\label{example3}
Let $Q$ be the quiver 
$$\xymatrix@=5mm{ 1& 2\ar[l]&3\ar[l]\ar[r]&4\ar[r]&5.}$$
Let $d=( 2, 1, 2, 1, 2)$. Let $d^1=( 2, 1, 2, 0, 0)$ 
and  $d^2=( 0, 0, 2, 1, 2)$. Then $X(d^1)=M^1\oplus M^2$ and 
$X(d^2)=N^1\oplus N^2$ with 
$
M^1=X(\{1, 3\}), M^2=X(\{1,2, 3\}), N^1=X(\{3, 4,  5\})$ and $N^2=X(\{3,5\})$
 as follows, 
$$\xymatrix@=2mm{  
&&&& & 1\ar@{=>}[dr] & &&&  &&  & &&   5\ar@{=>}[dl]&&&&& \\
& &&& &&2\ar[dl]\ar@{=>}[dr]    & &&  &&&&4\ar[dr]\ar@{=>}[dl]& &&&&\\
1\ar@{=>}[dr]&&3\ar[dl]\ar[dr]&&& 1\ar@{=>}[dr] && 3\ar[dr]\ar[dl] &&&& 
&3\ar[dl]\ar[dr] && 5\ar@{=>}[dl] & &&  3\ar[dl]\ar[dr]&&5\ar@{=>}[dl] \\
& 2\ar[dl]&&4\ar[dr]&&&2 \ar[dl]&& 4 \ar[dr]
&&& 2\ar[dl] && 4\ar[dr] && & 2\ar[dl]&&4\ar[dr] \\
1 &&&& 5, &1&&&&5,& 1&&&&5, &1 &&&&5. }$$
We have $M^1\leq_{3} M^2$ and  $N^1\leq_3 N^2$. 
So $X(d)$ is the direct sum of the gluings of $M^1$, $M^2$ with $N^1$ and $N^2$, 
respectively, i.e.,  
$$\xymatrix@=2mm{
&&&&5\ar@{=>}[dl]& &1\ar@{=>}[dr]   &&& &\\
&  &&  4\ar@{=>}[dl]\ar[dr] &&& &2\ar[dl]\ar@{=>}[dr] & &\\
1\ar@{=>}[dr]&&3\ar[dl]\ar[dr]&& 5\ar@{=>}[dl]& \oplus &
1\ar@{=>}[dr]&&3\ar[dr]\ar[dl]&&5\ar@{=>}[dl]\\
&2\ar[dl] &&4\ar[dr]    &&&& 2\ar[dl]&&4\ar[dr]\\
1 &&&& 5 &         &1 &&&&5}
$$
\end{example}

\vspace{5mm}

By the construction of rigid good modules, we have the following lemma.

\begin{lemma}\cite{JS, JSY}
The indecomposable summands of a rigid good $D$-modules 
$X(d)$ that are supported at a vertex $u$ are totally ordered by $\leq_u$.
\end{lemma}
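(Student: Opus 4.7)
The plan is to reduce the claim to the linear case within each admissible interval and then track how the gluing at interior admissible vertices preserves comparability. Recall that by the construction of Section~3.2, each indecomposable summand $M$ of $X(d)$ is determined by a sequence of pieces $(I^M_1,\dots,I^M_{t+1})$ with $I^M_r\subseteq[i_{r-1},i_r]$, where consecutive nonempty pieces share the admissible endpoint $i_r$. I would begin by isolating the summands supported at $u$ and describing, interval by interval, which pieces $I^M_r$ actually contribute to the support at $u$.

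First, inside a single interval $[i_{r-1},i_r]$, the summands of $X(d^r)$ produced by the linear-case construction of Section~3.1 form a chain under subset inclusion of their $\Delta$-supports, since they are defined as $X(I_1),X(I_2),\dots$ with $I_1\subseteq I_2\subseteq\cdots$. When $u$ is a source or sink, support at $u$ is a monotone condition along this chain (for instance, if $u$ is the sink of the interval, then $u$ is always in the ordinary support provided the $\Delta$-support is nonempty), so the subchain of summands supported at $u$ is itself totally ordered. Next, the gluing rule pairs the $i$-th largest summand of $X(d^r)$ with the $i$-th largest summand of $X(d^{r+1})$ at the shared admissible vertex; applied inductively across all intervals, this yields a canonical indexing of the summands of $X(d)$ supported at $u$ by position in each interval's chain.

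Finally, I would verify that the parity alternation in Definition~\ref{order} is exactly the alternation the gluing imposes. At an admissible sink-vertex the natural chain on the pieces runs by subset containment, whereas at an admissible source-vertex it reverses, because what is "largest" as a submodule of the injective hull at a sink corresponds dually to "smallest" as a quotient determining the glued extension at a source. Hence, as one moves from one interval to the next across $X(d)$, the direction of inclusion flips at each admissible crossing, producing the alternation indexed by the parity of $s-v$. Combining everything, any two summands $M^i,M^j$ of $X(d)$ supported at $u$ satisfy the required alternating subset relations in every interval where both contribute, so $M^i$ and $M^j$ are comparable under $\leq_u$.

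The main obstacle I expect is making this parity bookkeeping precise. One must verify case by case that the "largest-to-largest" gluing at an admissible vertex $i_r$ really translates into a flip of inclusion direction between the adjacent intervals, and then check that the flips propagate consistently across any number of intermediate intervals between an arbitrary interval index $s$ and the reference vertex $u$, so as to produce exactly the $s-v$ parity condition appearing in Definition~\ref{order}. Once this combinatorial match is in hand, totality of $\leq_u$ on the summands supported at $u$ follows at once from the totality of the inclusion chain within each individual interval.
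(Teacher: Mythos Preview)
The paper does not give its own proof of this lemma: it is stated with the citation \cite{JS, JSY} and is preceded only by the sentence ``By the construction of rigid good modules, we have the following lemma.'' Your proposal is therefore an attempt to unpack what ``by construction'' means rather than a competitor to an argument in the present paper, and the outline you give---totality of the inclusion chain $I_1\subseteq I_2\subseteq\cdots$ within each linear interval, together with the largest-to-largest gluing rule propagating this order across admissible vertices---is indeed the content of the construction in Section~3.2.

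Your identification of the delicate point is accurate: the parity flip in Definition~\ref{order} must be matched to the alternation of source and sink at consecutive interior admissible vertices of a type~$\mathbb{A}$ quiver. The heuristic you offer (``largest as a submodule at a sink corresponds dually to smallest as a quotient at a source'') is not quite the right mechanism, however. The flip comes rather from the fact that the rank of a piece $X(I)$ in the order $\leq_{i_j}$ at the shared admissible endpoint depends on whether $i_j$ is the sink or the source of the interval in question, so that ``$i$th biggest on the left glued to $i$th biggest on the right'' in $\leq_{i_j}$ forces opposite inclusion directions on the $\Delta$-supports in the two adjacent intervals. Once this is made precise, an induction on $|s-v|$ gives the result; this is presumably what is carried out in \cite{JS, JSY}, to which the paper defers.
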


{\color{black}
\subsection{Construction of Richardson elements}\label{REconstruction} Observe that a standard parabolic algebra in $\mathfrak{gl}_n$ can be naturally identified with the endomorphism algebra of a projective representation of a linear quiver. For instance, $\mathfrak{p}^+_U\leq \mathfrak{gl}_5$ with 
$U=\Pi\minus \{\alpha_2\}$ can viewed as $\mathrm{End}(P_1^2\oplus P_2^3)$ for projective representation $P_1^2\oplus P_2^3$ of the quiver $$\xymatrix{1 \ar@{->} [r]&2},$$ where the number of vertices  is the number of Levi-blocks of $\mathfrak{p}^+_U$ and the multiplicities $2$ and $3$ of $P_1$ and $P_2$ are the sizes of the Levi-blocks. This identification is due to the following,
$$\hom(P_1, P_1)=\hom(P_2, P_2)= \hom(P_2, P_1)=k \text{ and } \hom(P_1, P_2)=0.$$
Similarly, a seaweed Lie algebra $\mathfrak{q}_{S, T}$ can be view as an endomorphism algebra of a projective module of a quiver of 
type $\mathbb{A}$. In both cases, the nilpotent radical can then be identified with the Jacobson radical of the endomorphism algebra. 
\begin{example} \label{newexample}
Consider $\mathfrak{q}_{S, T}\leq \mathfrak{gl}_8=\mathfrak{gl}(V)$ with $S=\{\alpha_1, \alpha_4, \alpha_6, \alpha_7\}$ and $T=\Pi\minus \{\alpha_5, \alpha_7\}$. 
$$\mathfrak{q}_{S, T}=
\left(
\begin{matrix}
 $*$& $*$&&&& &&\\  
 $*$& $*$&&&&&&
\\  $*$& $*$& $*$&&& &&
\\  $*$& $*$& $*$& $*$& $*$ &  $*$& $*$& $*$
\\  $*$& $*$& $*$& $*$& $*$ &  $*$& $*$& $*$\\
 &&&{}& & $*$& $*$& $*$ \\
&&&{ }& & &$*$&$*$ \\
&&&& & &$*$&$*$\\
\end{matrix}
\right).
$$

Let $V=\oplus_iV_i$ with $V_i$s determined by the Levi-blocks and of dimension $2, \;1, \;2, \;1, \;2$, respectively. We have the following embeddings: 

$$
\xymatrix{V_1\oplus V_2\oplus V_3& \ar@{^{(}->}[l] V_2\oplus V_3  \ar@{^{(}->}[l] & \ar@{^{(}->}[l]   V_3  \ar@{^{(}->}[r] & V_3\oplus V_4 \ar@{^{(}->}[r] & 
V_3\oplus V_4\oplus V_5}
$$
This is the projective representation $P=P_1^1\oplus P_2\oplus P_3^2\oplus P_4\oplus P_5^2$ of the quiver $Q$
$$ \xymatrix{1& \ar@{->}[l] 2  & \ar@{->}[l]   3  \ar@{->}[r] & 4 \ar@{->}[r] & 5}$$
Denote by $W_i$ the subspace at step $i$ (starting from the left) in the embedding sequence. 
Restricting a point in the nilpotent radical $\mathfrak{n}_{S, T}$ to the subspaces $W_i$ produces a map of the neighboring spaces (in the direction opposite to the inclusions) and the relations of generic maps are exactly the defining relations of $D$ associated to $Q$. 
Let ${Q}_{S, T}$ be the Lie group associated to $\mathfrak{q}_{S, T}$,   $c_i=\dim W_i$ and $c=(c_i)$. Let $\mathrm{Rep}(D, P)$ be the  variety  of good $D$-modules with dimension vector $c$ 
as below
$$\{M\in \Pi_{i\rightarrow j\in \tilde{Q}_1}\hom(k^{c_i}, k^{c_j})\mid M \text{ satisfies the relations }
\mathcal{I} \text{ and } M|_Q=P\},$$
where $\mathcal{I}$ is as defined in Section \ref{introducingD}. We have 
\begin{itemize}
\item[(1)] $\mathfrak{n}_{S, T}=\mathrm{rad}\mathrm{End} P$ and ${Q}_{S, T}=\mathrm{Aut}P$, the automorphism group of $P$.

\item[(2)]  The three adjoint actions 
 ${Q}_{S, T}$ on  $\mathfrak{n}_{S, T}$, $\mathrm{Aut}P$ on $\mathrm{rad}\mathrm{End} P$ and 
$\mathrm{Aut}P$ on $\mathrm{Rep}(D, P)$
are essentially the same. In particular, open orbits correspond to open orbits. 

\item[(3)] By Voigt's Lemma \cite{Voigt}, a rigid $D$-module in $\mathrm{Rep}(D, P)$ implies an open $\mathrm{Aut}P$-orbit in 
$\mathrm{Rep}(D, P)$. This then implies the existence of Richardson elements in $\mathfrak{n}_{S, T}$ and we can construct 
a Richardson element from the rigid $D$-module (see what follows.)
\end{itemize}
\end{example}
}

We now decribe how to construct a Richardson element $r(d)$ for $\mathfrak{q}_{S, T}\subseteq \mathfrak{gl}_n$ 
from the rigid module $X(d)$. Note 
that $X(d)$ is constructed based on data contained in $\mathfrak{q}_{S, T}$.
Let $$X(d)=\bigoplus_{i} X^i$$ be a decomposition of $X(d)$ into indecomposable summands and let $n=\sum_id_i$. 
For each summand $X^i$ that is $\Delta$-supported at $j$, choose a number $x_{ij}$, where $$\sum_{l<j} d_l < x_{ij} \leq\sum_{l\leq j} d_l$$
such that $x_{ij}\neq x_{lj}$ for two different summands $X^i$ and $X^l$.  
If $X^i$ is $\Delta$-supported at both $s$ and $t$ with $s<t$, but not at $s+1,\cdots,t-1$, then  the matrix $r(d)\in \mathfrak{gl}_n$ has a $1$ at 
either $(x_{is},x_{it})$ or $(x_{it},x_{is})$, depending on which root-space belongs to $\mathfrak{q}_{S,T}=\mathrm{End}_A(P(d))$. All
other entries in $r(d)$ are equal to $0$.
Note that
$$\mathrm{End}_D(X(d))\cong \mathrm{stab}_{\mathfrak{q}_{S.T}}(r(d)).$$ 

The matrix $r(d)$ is also
the adjacency matrix of an oriented graph with components corresponding to indecomposable summands of $X(d)$. 
See  the example below for an illustration and \cite{baur, JSY, BHRR} for more detail.

\begin{example} \label{exampled}
The rigid modules $X(d)$ in Example \ref{example2} and \ref{example3} correspond, 
respectively,  to the parabolic subalgebra of $\mathfrak{gl}_5$ with Richardson element $r_1$ and the seaweed subalgebra of $\mathfrak{gl}_8$, which is exactly the seaweed given in Example \ref{newexample}, 
with Richardson element $r_2$ as follows 
$$r_1=
\left(
\begin{matrix}
{0}&{ 0}&{ 1}&{ 0}&{ 0}\\  {0}&{ 0}&{0}&{0}&{ 1}
\\ &&{ 0}&{ 1}&{ 0}
\\&&&{ 0}&{ 0}
\\ &&&{ 0}&{ 0}
\end{matrix}
\right) \;\mbox{ and }
r_2=
\left(
\begin{matrix}
{0}&{0}&&&& &&\\  
{0}&{0}&&&&&&
\\ {0}&{1}&{0}&&& &&
\\ {1}&{0}&{0}&{0}&{0} & {1}&{0}&{0}
\\ {0}&{0}&{1}&{0}&{0} & {0}&{0}&{1}\\
 &&&{}& & {0}&{1}&{ 0} \\
&&&{ }& & &{0}&\ {0} \\
&&&& & & {0}& {0}\\
\end{matrix}
\right).
$$
The two subalgebras are direct sum of the Cartan subalgebras and root spaces
in the bold faces, respectively. The corresponding oriented graphs are.
$$\xymatrix{P_1 & P_2 \ar[l] & P_3 \ar[l], && P_1 \ar[rr] & & P_3 & \ar[l] P_4 & P_5 \ar[l] \\ P_1 && P_3 \ar[ll] && P_1 \ar[r] & P_2 \ar[r] & P_3 & & P_5 \ar[ll]}$$
\end{example}

\section{Stabilisers of Richardson elements in type $\mathbb{A}$}

Consider the quiver $Q$ of type $\mathbb{A}_m$ of arbitrary orientation as in Section 3. 
For $D$-modules $M$ and $N$, let $$\Hom_D(N,M)_i=\{f_i\mid f \in \Hom_D(N,M)\}$$
be the space of homomorphisms from $N$ to $M$ restricted to vertex $i$. Let 
$\mathrm{End}_D(M)_i$ and $\mathrm{Aut}_D(M)_i$ be defined similarly.
We study the structure of the endomorphism algebra of a good rigid $D$-module 
and its restriction to a vertex.  

Let $$X(d)=\bigoplus_{i}(X^i)^{n_i}$$ be a good rigid $D$-module with $_{A}X(d)=P(d).$ We
order the summands such that 
$X^i<_mX^{i+1}$ for all indecomposable summands $X^i$.

\subsection{Restriction to $m$, when $m$ is a source}

Let $V=X(d)_m$. Then $$\mathrm{End}_D(X(d))_m\subseteq \mathfrak{gl}(V).$$
\begin{lemma} \label{seaweedpara}
The subalgebra $\mathrm{End}_D(X(d))_m\subseteq \mathfrak{gl}(V)$ is parabolic. 
\end{lemma}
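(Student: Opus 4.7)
The plan is to exhibit a flag on $V = X(d)_m$ preserved by $\mathrm{End}_D(X(d))_m$ and show that the endomorphism ring at $m$ is precisely the corresponding parabolic stabiliser.

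First, I would use the lemma at the end of Section 3.2: since $m$ is a source (in particular admissible), the indecomposable summands of $X(d)$ supported at $m$ are totally ordered by $\leq_m$. List them as $X^1 <_m X^2 <_m \cdots <_m X^s$ with multiplicities $n_1,\ldots,n_s$. Because $m$ is a source, no path in $Q$ terminates at $m$, so $(P_j)_m = 0$ for $j\neq m$; combined with the fact that an indecomposable rigid good $D$-module has $\Delta$-dimension $0$ or $1$ at every vertex, this forces $\dim_k (X^i)_m = 1$ for each $i\le s$. Consequently $V=\bigoplus_{i=1}^s V^i$ with $\dim V^i = n_i$, and the partial sums $V^{\le k}=\bigoplus_{i\le k} V^i$ form a flag in $V$, already ordered by $<_m$.

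Next, I would analyse the restriction map $\Hom_D(X^i,X^j)\to \Hom_k(V^i,V^j)$. The target is one-dimensional per indecomposable pair (since $\dim V^i = \dim V^j = 1$ at the level of single summands), so the question is simply whether $\Hom_D(X^i,X^j)_m$ vanishes. Using the gluing description of indecomposable rigid good $D$-modules as concatenations of the elementary pieces $X(I_s)$, I would show the nonvanishing direction first: for $i\le j$, the inclusion of the $P_m$-piece extends to a $D$-homomorphism $X^i\to X^j$ that is visibly nonzero at $m$. The crucial vanishing direction is $\Hom_D(X^j, X^i)_m = 0$ whenever $X^i <_m X^j$: because $m$ is a source, compatibility at $m$ with the dual arrows $\alpha^*\in Q_1^\ast$ forces any homomorphism $X^j\to X^i$ nonzero at $m$ to propagate into the larger $\Delta$-support of $X^j$ and produce a homomorphism into the smaller $X^i$, which is ruled out by the definition of $<_m$ (the alternating inclusion/reverse-inclusion condition on the glued pieces $X(I_s)$).

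Combining these two facts, $\mathrm{End}_D(X(d))_m$ is exactly the set of block upper-triangular matrices in $\mathfrak{gl}(V)$ with respect to the decomposition $V=\bigoplus V^i$ and block sizes $n_i\times n_j$, i.e. the standard parabolic subalgebra stabilising the flag $\{V^{\le k}\}$. The main obstacle I expect is the vanishing step $\Hom_D(X^j,X^i)_m=0$ for $j>_m i$: translating the combinatorial definition of $\leq_m$ (through alternating inclusions of $\Delta$-supports on the intervals $[i_s,i_{s+1}]$) into the required vanishing of a $D$-module homomorphism at the source $m$ demands a careful case analysis using the explicit gluing construction recalled in Section 3.2 and the relations in $\mathcal{I}$ linking the two components of the double quiver at $m$.
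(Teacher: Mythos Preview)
Your proposal is correct and follows essentially the same approach as the paper. The paper's proof is much terser: it simply asserts, ``by the construction of $X(d)$'', that $\Hom_D(X^i,X^j)_m$ is one-dimensional for $i\le j$ and zero otherwise, and then chooses basis elements $r_{ji}\in\Hom_D(X^i,X^j)_m$ satisfying $r_{ji}r_{kl}=\delta_{ik}r_{jl}$, which identifies $\mathrm{End}_D(X(d))_m$ with a standard parabolic directly; your flag argument and your more detailed justification of the (non)vanishing of $\Hom_D(X^i,X^j)_m$ via the source condition and the definition of $\leq_m$ are exactly the content the paper is suppressing behind that phrase.
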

\begin{proof}
By the construction of $X(d)$, $\mathrm{Hom}_D(X^i,X^j)_m$ is one dimensional for $i\leq j$ and zero otherwise. Further
we may choose basis elements $r_{ji}\in \mathrm{Hom}_D(X^i,X^j)_m$ for all $i$ and $j$, such that 
$r_{ji}r_{kl}=r_{jl}$ if $i=k$ and zero otherwise. The lemma follows.
\end{proof}

\subsection{Restriction to $m$, when $m$ is a sink.}
Recall that $\alpha_{m-1}:m-1\rightarrow m$ is the arrow that ends at $m$. 
For $D$-modules $M$ and $N$, let 
$$\mathrm{Hom}_D(M,N)^0_m=\{\overline{f}_m:M_m/\alpha_{m-1}(M_{m-1})\rightarrow N_m/\alpha_{m-1}(N_{m-1})|f\in\mathrm{Hom}_D(M,N)\}.$$
Let $$V=X(d)_m/\alpha_{m-1}(X(d)_{m-1}).$$ Then 
$\mathrm{End}_D(X(d))^0_m\subseteq \mathfrak{gl}(V)$. 

\begin{lemma} \label{parapara}
The subalgebra $\mathrm{End}_D(X(d))^0_m\subseteq \mathfrak{gl}(V)$ is parabolic.
\end{lemma}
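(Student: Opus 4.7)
The plan is to adapt the proof of Lemma~\ref{seaweedpara} to the sink case by analysing $\hom_D(X^i,X^j)^0_m$ in place of $\hom_D(X^i,X^j)_m$. The key preliminary observation is that, since $m$ is a sink, the indecomposable projective $A$-module $P_m$ is simple (concentrated at vertex~$m$). For any $A$-module $M=\bigoplus_j P_j^{e_j}$, the summands $P_j$ with $j<m$ contribute to $M_m$ only through the arrow $\alpha_{m-1}$, while the $P_m$ summands contribute a complement. Hence $\dim M_m/\alpha_{m-1}(M_{m-1}) = e_m$. Applied to $X(d)$, this shows that $V$ receives a one-dimensional contribution from each copy of each indecomposable summand $X^i$ with $m \in \supp_\Delta(X^i)$, and nothing from the others.

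First I would fix an ordering of the contributing summands. Let $X^{i_1}\leq_m X^{i_2}\leq_m\cdots\leq_m X^{i_s}$ list (with multiplicity, breaking ties arbitrarily) the indecomposable summands of $X(d)$ that are $\Delta$-supported at $m$; by the lemma at the end of Section~3 this list is totally ordered. The induced decomposition $V=V_1\oplus\cdots\oplus V_s$ into lines is what I will show is preserved up to a parabolic block-triangular structure by $\End_D(X(d))^0_m$.

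Next I would establish the Hom calculation at $m$: for each pair $k,l$, the space $\hom_D(X^{i_k},X^{i_l})^0_m$ is one-dimensional when $k\leq l$ and zero when $k>l$. The nonzero direction uses the gluing description of the $X^{i_k}$ as submodules of the indecomposable injective at the relevant sink: when $X^{i_k}\leq_m X^{i_l}$ and both are $\Delta$-supported at $m$, the identification of their $P_m$-slots extends to a $D$-homomorphism whose induced map on $V$ is nonzero. The vanishing direction requires that for $k>l$ any $f\in\hom_D(X^{i_k},X^{i_l})$ sends the $P_m$-slot of $X^{i_k}$ into the image of $\alpha_{m-1}$ in $X^{i_l}$, so $\overline{f}_m=0$; this is read off from the rigid good structure and the fact that in the ``wrong'' direction a homomorphism must factor through a proper submodule of the $P_m$-slot.

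Finally I would choose compatible bases $r_{lk}\in \hom_D(X^{i_k},X^{i_l})^0_m$ for $k\leq l$ with $r_{lk}r_{ji}=r_{li}$ if $k=j$ and zero otherwise, exactly as in the proof of Lemma~\ref{seaweedpara}. These span the block upper-triangular subalgebra of $\mathfrak{gl}(V)$ adapted to the flag $V_1\subset V_1\oplus V_2\subset\cdots\subset V$, which is parabolic. The main obstacle is the vanishing part of the Hom computation in Step~3: one must show that homomorphisms between good rigid summands going against the $\leq_m$ order have trivial induced map on the $m$-quotient. This relies essentially on the explicit construction of $X(d)$ by gluing the submodules $X(I)$ of the injective hull at each sink, and is the place where the sink case differs in detail, rather than in form, from the source case.
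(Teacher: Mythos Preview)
Your proposal is correct and follows exactly the approach the paper intends: the paper's own proof is the single sentence ``The proof is similar to the proof of Lemma~\ref{seaweedpara},'' and your write-up is precisely the natural elaboration of that sentence---computing $\Hom_D(X^{i_k},X^{i_l})^0_m$ to be one-dimensional or zero according to the $\leq_m$ order, then choosing multiplicatively compatible basis elements $r_{lk}$ to exhibit the parabolic structure. Your added preliminary observation that $\dim V$ counts the $\Delta$-support at $m$ (since $P_m$ is simple when $m$ is a sink) and your identification of the vanishing step as the one place the sink case genuinely differs from the source case are both accurate refinements, not deviations.
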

\begin{proof}
The proof is similar to the proof of Lemma \ref{seaweedpara}.
\end{proof}

We remark that $\mathrm{End}_D(X(d))^0_m$ is in fact
isomorphic to $\mathrm{End}_A(P_m^{d_m})$.
\subsection{Stabilisers of indecomposable rigid modules}

There is an obvious 
embedding of endomorphism rings 
$$\prod_{i}\mathrm{End}_D(X^i)^{n_i}\subseteq \mathrm{End}_D(X(d)).$$
As before let $1=i_0<i_1< \cdots< i_{t+1}=m$ be the admissible vertices
of $Q$. 

Let $v$ be a sink in $Q$ and let $\alpha$ be an arrow ending at $v$. Then
$\alpha\alpha^*$ induces a nilpotent endomorphism $x$ of $X^i$.
We
have such an endomorphism 
$x_s:X^i\rightarrow X^i$ for each interval $[i_{s-1},i_s]$ with $s=1,\cdots,t+1$,
where $x_s$ can be zero, depending on the intersection of $\mathrm{supp}_{\Delta}(X^i)$ and $[i_{s-1}, i_s]$. 
Note that $x_s$ is zero on  vertices that are not in the interval $[i_{s-1},i_s]$.
Let $m_s\geq 1$ be the smallest integer
such that ${x_s}^{m_s}=0$. In fact, $$m_s= \mid\mathrm{supp}_{\Delta}(X^i)\cap [i_{s-1}, i_s]\mid.$$

\begin{lemma} \label{prettylemma}
The map $y_s\mapsto x_s$ induces an isomorphism $$\frac{k[y_1,\cdots,y_{t+1}]}{<y_sy_l=0 \mbox{ for } s\neq l, y_s^{m_s}=0>}
\stackrel{\simeq}{\longrightarrow} \mathrm{End}_D(X^i)$$
\end{lemma}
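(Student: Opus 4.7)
The plan is to prove that the $k$-algebra map
$\varphi\colon k[y_1,\ldots,y_{t+1}]/\langle y_sy_l\text{ for }s\neq l,\ y_s^{m_s}\rangle\to \mathrm{End}_D(X^i)$
defined by $y_s\mapsto x_s$ is a well-defined homomorphism that is both injective and surjective. I would split this into three steps: verifying the defining relations, establishing injectivity by testing at a well-chosen vertex, and finally pinning down surjectivity through a dimension count.

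For the relations, the nilpotency $x_s^{m_s}=0$ is built into the definition of $m_s$. To see $x_sx_l=0$ when $s\neq l$, recall that $x_s$ is induced by an element of the form $\alpha\alpha^*$ with $\alpha$ ending at the sink of $[i_{s-1},i_s]$, and hence acts as zero on any vertex outside that interval. By the construction of $X^i$ in Section 3.2, adjacent intervals share only an admissible vertex $i_j$, where the glued piece is a copy of $P_{i_j}$; on this piece the operator $\alpha\alpha^*$ coming from either adjacent sink vanishes. Thus the non-zero parts of $x_s$ and $x_l$ live on disjoint vertex sets and their composition is zero, so $\varphi$ is a well-defined $k$-algebra homomorphism.

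For injectivity, I would evaluate at the sink $v_s$ of the subinterval $[i_{s-1},i_s]$. On this subinterval $X^i$ is a submodule of the projective-injective at $v_s$, and $X^i_{v_s}$ contains a Jordan chain of length $m_s$ under $\alpha\alpha^*$, exactly as in the linear case of Section 3.1. Hence $1,x_s,x_s^2,\ldots,x_s^{m_s-1}$ act as $m_s$ distinct operators at $v_s$, and combined with the disjoint-interval observation above the set $\{1\}\cup\{x_s^j:1\le j<m_s,\ 1\le s\le t+1\}$ is linearly independent in $\mathrm{End}_D(X^i)$, so $\varphi$ is injective.

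The main obstacle is surjectivity, equivalently the dimension match $\dim\mathrm{End}_D(X^i)=1+\sum_{s=1}^{t+1}(m_s-1)$. My plan is to take an arbitrary $f\in\mathrm{End}_D(X^i)$, restrict it to each subinterval $[i_{s-1},i_s]$ on which $X^i$ becomes an indecomposable good module for the corresponding linear double quiver, and invoke the linear result from \cite{BHRR} identifying that restricted endomorphism ring with $k[y_s]/(y_s^{m_s})$. The delicate point is to show that the scalar (degree-zero) parts on either side of each admissible vertex $i_j$ are forced to coincide: this reduces to the fact that the $P_{i_j}$-gluing piece has a one-dimensional $D$-endomorphism space, which prevents independent scalars on different intervals and gives a single copy of $k\cdot 1$, while the nilpotent shifts $x_s^j$ from different intervals remain independent by the orthogonality proved in step one. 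This yields a decomposition $f=c\cdot 1+\sum_{s,j}a_{s,j}x_s^j$, exhibiting $f$ in the image of $\varphi$ and completing the proof.
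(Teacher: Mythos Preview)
Your three-step outline matches the paper's proof, but the execution diverges in two places.

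First, your argument for $x_sx_l=0$ has a gap when the shared admissible vertex $i_j$ is a \emph{sink}. In that case both arrows $\alpha$ (from the left interval) and $\beta$ (from the right) end at $i_j$, so $x_s$ and $x_l$ act nontrivially on $X^i_{i_j}$, which is typically of dimension $>1$; your claim that ``$\alpha\alpha^*$ vanishes on the glued copy of $P_{i_j}$'' does not settle the vanishing of the composite on all of $X^i_{i_j}$. The paper handles this case directly by invoking the defining relation $\alpha^*\beta=0$ in $D$ for two distinct arrows terminating at the same vertex, which gives $(\alpha\alpha^*)(\beta\beta^*)=\alpha(\alpha^*\beta)\beta^*=0$ immediately. (Your support argument is fine when $i_j$ is a source, since then $X^i_{i_j}$ is one-dimensional and the nilpotent $x_s$ must vanish there.)

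Second, the paper inverts your emphasis on injectivity versus surjectivity. You call surjectivity ``the main obstacle'' and propose a dimension count via restriction to each linear subinterval; the paper instead dispatches surjectivity in one line (``by the construction of $X^i$, $\mathrm{End}_D(X^i)$ is generated by the $x_s$'') and proves injectivity by observing that the images of $x_s$ and $x_l$ intersect trivially for $s\neq l$. Your restriction argument would work, but it is more labour than needed: once the $x_s$ are known to generate, injectivity plus finite-dimensionality already give the dimension equality you are aiming for.
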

\begin{proof}
Let $x_s$ and $x_l$ be two arbitrary endomorphisms, induced by $\alpha\alpha^*$ and $\beta\beta^*$, respectively,
where $\alpha$ and $\beta$ are two different arrows. 
Clearly $x_sx_l=0$ if $\alpha$ and $\beta$ end at different sinks. Otherwise,  
 $x_sx_l=0$ follows from the relation $\alpha^*\beta=0$. 
By definition, $x_s^{m_s}=0$. So the map is well-defined. 

By the construction of $X^i$, $\mathrm{End}_D(X^i)$ is generated by the $x_s$ and
thus the map is surjective.
The intersection of the images of $x_s$ and $x_l$ is zero if $s\neq l$, and so the 
injectivity follows. 
\end{proof}

By the embedding $$\mathrm{End}_D(X(d))\subseteq \mathrm{End}_A(P(d))\subseteq \mathfrak{gl}_n$$
and the construction of Richardson elements discussed in Section \ref{REconstruction}, each element of $\prod_{i}\mathrm{End}_D(X^i)^{n_i}$ can be explicitly described in
terms of matrices. They can also be described in terms of the oriented graphs
constructed from $X^i$. 

\begin{example}
We use the two Richardson elements from Example \ref{exampled}. In both cases, there are 
two indecomposable summands $X^1$ and $X^2$ in $X(d)$. 
We use two different colours to describe 
$\mathrm{End}_D(X^1)$ and $\mathrm{End}_D(X^2)$ in both cases,
$$\mathrm{End}X^1\oplus \mathrm{End}X^2(\subseteq \mathrm{stab}(r_1)):
\left(
\begin{matrix}
{ a}&{ 0}&{ b}&{ c}&{ 0}
\\ { 0}&{ \color{blue} d}&{ 0}&{ 0}&{ \color{blue} e}
\\ &&{ a}&{ b}&{ 0}
\\ &&&{ a}&{ 0}
\\ &&&{ 0}&{ \color{blue} d}
\end{matrix}
\right) $$
and 
$$\mathrm{End}X^1\oplus \mathrm{End}X^2(\subseteq \mathrm{stab}(r_2)):
\left(
\begin{matrix}
{ a}&{ 0}&&&&&&\\  
{ 0}&{  \color{blue}  e}&&&&&&
\\ { 0}&{  \color{blue} f}&{  \color{blue} e}&&& &&
\\ { b}&{ 0}&{ 0}&{ a}&{ 0} & { c}&{ d}&{ 0}
\\ { 0}&{  \color{blue}  g}&{  \color{blue}  f}&{ 0}&{  \color{blue}  e} & { 0}&{ 0}&{  \color{blue}  h}\\
 &&&{}& & {a}&{ c}&{ 0} \\
&&&{ }& & &{a}&{0} \\
&&&& & &{0}&{ \color{blue}  e}\\
\end{matrix}
\right),
$$
where for instance in the first matrix,  the ${  a}$-entries are $a\cdot 1_{X^1}$
the ${  b}$-entries are $b\cdot x^1_1$ with $x^1_1: X^1\rightarrow X^1$, the ${  c}$-entries are $c\cdot (x^1_1)^2$
and the $e$-entry is $e\cdot x^2_1$ with $x^2_1: X^2\rightarrow X^2$. 

The non-zero off-diagonal entries in the matrices correspond to non-trivial paths in the oriented graphs as follows.
$$\xymatrix{& P_1 & P_2 \ar[l]^{ b} &P_3 \ar[l]^{ b} \ar@/_/[ll]_{ c} &&  P_1 
\ar[rr]_{ b} & & P_3 & \ar[l]^{ c} P_4 &P_5\ar[l]^{ c} \ar@/_/[ll]_{ d} \\ & P_1 && P_3 
\ar[ll]^{\color{blue} e} &&   P_1 \ar@/^/[rr]^{\color{blue} g} \ar[r]_{\color{blue} f} & P_2 
\ar[r]_{\color{blue} f} & P_3 & & P_5. \ar[ll]_{\color{blue} h}}$$
\end{example}

\section{The main result}

\begin{theorem} \label{maintheorem}
Let $\g$ be a simple Lie algebra of type $\mathbb{B}$, $\mathbb{C}$ or $\mathbb{D}$. Then any seaweed 
in $\g$ has a Richardson element. 
\end{theorem}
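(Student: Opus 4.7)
The plan is to assemble Sections~2--4 into a case analysis that exhausts every standard seaweed in $\g$. By Proposition~\ref{standardseaweed} it suffices to treat a standard seaweed $\mathfrak{q}_{S,T}$, and by Lemma~\ref{symmetry} we may assume $\epsilon\ge\eta\ge 1$. If $\epsilon=\omega$ --- which includes the parabolic case $\epsilon=n$, the type~$\mathbb{A}$ embeddings for $\epsilon\in\{1,2\}$, and the decomposable case --- Lemma~\ref{specialcase} already produces a Richardson element. The remaining anomaly $(\epsilon,\omega)=(2,1)$ in type~$\mathbb{D}$, to which the decomposition of Section~2.2 does not apply, I would dispose of by a direct construction, as foreshadowed by the separately stated Lemma~\ref{lemmam2}.

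In the main case $\epsilon>\omega$ with, in type~$\mathbb{D}$, $(\epsilon,\omega)\ne(2,1)$, Lemma~\ref{keylemma} reduces the theorem to the single assertion that there exist Richardson elements $r_2\in\a_{S,T}$ and $r'_2\in\a_U$ whose stabilisers agree on $\mathfrak{l}$,
$$\mathrm{stab}_{\a_U}(r'_2)_{|\mathfrak{l}}\;=\;\mathrm{stab}_{\a_{S,T}}(r_2)_{|\mathfrak{l}}.$$
Here $\a_{S,T}$ and $\a_U$ are both type~$\mathbb{A}$ subalgebras (a seaweed and a parabolic, respectively), defined by $(S'',T'')$ and $(U'',\{\alpha_i\mid i>\omega\})$; by construction these data coincide on the strip of indices encoding $\mathfrak{l}$.

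To establish the equality I would take $r_2$ and $r'_2$ to be the Richardson elements furnished by the rigid-module recipe of Section~3, coming from good rigid $D$-modules $X(d)$ and $X(d')$ over the corresponding type~$\mathbb{A}$ quivers. The identification recalled before Example~\ref{exampled} gives
$$\mathrm{stab}_{\a_{S,T}}(r_2)\;\cong\;\mathrm{End}_D(X(d)),\qquad \mathrm{stab}_{\a_U}(r'_2)\;\cong\;\mathrm{End}_D(X(d')),$$
so the restrictions to $\mathfrak{l}$ are precisely the restrictions of these endomorphism rings to the quiver vertices $\omega,\omega+1,\dots,\epsilon-1$. Because the defining data of $\a_{S,T}$ and $\a_U$ agree on this interval, the underlying quivers, dimension vectors, admissible vertices and the ordering $\leq_\omega$ of summands supported at $\omega$ all coincide. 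Lemmas~\ref{seaweedpara} and~\ref{parapara} then describe the restriction to the boundary vertex $\omega$ as an explicit parabolic in $\mathfrak{gl}(V_\omega)$ controlled by $\leq_\omega$, and Lemma~\ref{prettylemma} pins down the action on the interior vertices of $(\omega,\epsilon)$ in terms of local nilpotent generators depending only on the shared interval data.

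The main obstacle is precisely this last matching: $X(d)$ and $X(d')$ are globally different rigid modules and need not share a direct sum decomposition, yet one must extract from each the same subalgebra after restricting to $\mathfrak{l}$. The construction of Section~3 is, however, sufficiently canonical on each subinterval between admissible vertices that, by pairing summands according to the common ordering $\leq_\omega$ and invoking Lemmas~\ref{seaweedpara}, \ref{parapara} and \ref{prettylemma} summand by summand, the restrictions $\mathrm{End}_D(X(d))|_{[\omega,\epsilon-1]}$ and $\mathrm{End}_D(X(d'))|_{[\omega,\epsilon-1]}$ can be identified. Once that identification is in place, Lemma~\ref{keylemma} delivers the theorem.
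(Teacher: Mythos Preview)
Your high-level case analysis is exactly the paper's: reduce to standard seaweeds, use Lemma~\ref{specialcase} when $\epsilon=\omega$, treat type~$\mathbb{D}$ with $(\epsilon,\omega)=(2,1)$ by Lemma~\ref{lemmam2}, and in the generic case invoke Lemma~\ref{keylemma}. The difficulty, as you correctly flag, is verifying the hypothesis of Lemma~\ref{keylemma}, and this is where your sketch goes astray.

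First, a misidentification: $\mathfrak{l}$ is a single Levi block of $\a_{S,T}$, hence corresponds to a \emph{single} vertex $m$ of the quiver $Q$ underlying $\a_{S,T}\cong\mathrm{End}_A(P(d))$, not to a range of quiver vertices ``$\omega,\omega+1,\dots,\epsilon-1$''. The quiver vertices index Levi blocks, not simple roots. Consequently your appeal to Lemma~\ref{prettylemma} for ``interior vertices of $(\omega,\epsilon)$'' is misplaced; that lemma is not used at all in the generic case (it enters only in the proof of Lemma~\ref{lemmam2}).

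Second, and more seriously, you treat $\a_U$ as if it were determined by local data matching $\a_{S,T}$ near $\mathfrak{l}$, and then try to argue that the two endomorphism rings agree locally. But $U$ (and even the rank of $\g'$) is a \emph{free choice} in Section~2.3, subject only to $\epsilon\notin U$ and agreement with $T$ below $\epsilon$. The paper exploits this freedom decisively: it first computes $\mathrm{stab}_{\a_{S,T}}(r_2)_{|\mathfrak{l}}=\mathrm{End}_D(X(d))_m$, which by Lemma~\ref{seaweedpara} is a standard parabolic in $\mathfrak{l}$ with block sizes $c_1,\dots,c_l$; then it \emph{manufactures} $\a_U$ from a linearly oriented $\mathbb{A}_l$-quiver with dimension vector $\hat c$ built from the $c_j$, so that by Lemma~\ref{parapara} the restriction $\mathrm{End}_F(X(\hat c))^0_l$ is the \emph{same} standard parabolic. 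No summand-by-summand matching of $X(d)$ with $X(d')$ is attempted or needed; the quivers for $\a_{S,T}$ and $\a_U$ are globally unrelated (the vertex for $\mathfrak{l}$ is a source in one and a sink in the other), and the equality at $\mathfrak{l}$ is engineered by the choice of $U$, not deduced from shared local structure.
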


{\color{black}We continue to use the notation from Section 2. The proof of the theorem
is split into two  cases. The first one (see Theorem \ref{lemmam1})
deals with   the general situation, where we assume $(i)$ and $(ii)$ in Section \ref{generalassumption}. 
We will use quiver representations
and results from Section 4 to verify that the sufficient condition in Lemma \ref{keylemma} holds,
and so Richardson elements exist.
The second one deals with the special situation, where $\g$ is of type $\mathbb{D}$ and $(\epsilon,\omega)=(2,1)$.
Unlike the general case,  there can be root spaces that are not contained in $\a_{S,T}+\c_{S,T}$ 
(cf Lemma \ref{decomposition}). That is, $\mathfrak{q}_{S, T}$ is not
necessarily equal to 
$\a_{S,T}+\c_{S,T}$. }

\begin{theorem} \label{lemmam1}
Let $\mathfrak{q}_{S,T}\subseteq\g$ be a seaweed, where $\g$ is of type $\mathbb{B}$ or 
$\mathbb{C}$, or of type $\mathbb{D}$ with $(\epsilon,\omega)\neq (2,1)$. 
Then $\mathfrak{q}_{S,T}$ has a Richardson element. 
\end{theorem}
\begin{proof}
 Note that we only need to consider the situation, where $\epsilon>\omega$ and neither 
$S$ nor $T$ is equal to $\Pi$ or $\emptyset$. We may also assume $(ii)$ in Section \ref{generalassumption}.

Let $P=P(d)$ be a projective $A$-module such that $$\mathrm{End}_A(P(d))\simeq \a_{S,T},$$
where $P(d)$ is a projective representation of a quiver $Q$ of type $\mathbb{A}_m$ and the labeling of the vertices and arrows of 
$Q$ is as in Section \ref{introducingD}. Let 
$r\in \mathfrak{n}_\a$ be a Richardson element constructed from the good rigid $D$-module $X(d)$ as in Section \ref{REconstruction}.
Fix an embedding
$$\mathrm{End}_A(P(d))\subseteq \g$$ such that $$\mathrm{End}_A(P(d))=\a_{S,T} \text{ and }
\mathrm{End}_A(P(d))_m=\mathfrak{l},$$ where $\mathfrak{l}=\a_{S,T}\cap \c_{S,T}$ is as in Section 2.2. 
By the condition $\epsilon>\omega$ on $\a_{S,T}$, the vertex $m$ is a source. So 
$$\mathrm{stab}_{\a_{S,T}}(r)_{|\mathfrak{l}}=\mathrm{End}_D(X(d))_m$$ is a 
parabolic in $\mathfrak{l}$, by Lemma \ref{seaweedpara}. 

We order the summands $X(d)$ from big to small with respect to the order
$\leq_m$, so that $\mathrm{End}_D(X(d))_m$ is standard upper triangular.
Suppose that the sizes of the blocks in the Levi-subalgebra of $\mathrm{End}_D(X(d))_m$ are $c_1, \dots, c_l$
and let $$\hat{c}=(c_l, c_{l-1}+c_l, \dots,  \sum_{j\geq i}c_j, \dots, \sum_{j\geq 1}c_j).$$
Let $B$ be the path algebra of  the linearly oriented quiver $\mathbb{A}_l$ with the
unique sink $l$ and let 
$$P(\hat{c})=P_1^{\hat{c}_1}\oplus \dots \oplus P_l^{\hat{c}_l},$$ 
a projective representation of this linear quiver.
Denote by $F$ the algebra of the associated double quiver with relations, 
defined in the same way as the algebra $D$ in Section \ref{introducingD}, and let $X(\hat{c})$ be the rigid good $F$-module.

Note that 
$$
\mathrm{End}_B(P(\hat{c}))=\mathrm{End}_D(P(\hat{c}))
$$
and  by abuse of notation we  let 
$$
\mathrm{End}_B(P(\hat{c}))_l^0=\mathrm{End}_D(P(\hat{c}))_l^0,
$$
which is in fact isomorphic to $\mathrm{End}_B(P_l^{\hat{c}_l})$.
Choose $\g'$ and $U$ (see Section 2.3), and an embedding $$\mathrm{End}_B(P(\hat{c}))\subseteq \g'$$ such that
$$\mathrm{End}_B(P(\hat{c}))=\a_U \mbox{ and }\mathrm{End}_B(P(\hat{c}))^0_l=\mathfrak{l}.$$ 
Let $r'\in \a_{U}$ be a Richardson element corresponding to $X(\hat{c})$, where the summands of $X(\hat{c})$ are
ordered from small to big with respect to the order $\leq_l$, so that $$\mathrm{End}_F(X(\hat{c}))^0_l=\mathrm{stab}_{\a_U}(r')_{|\mathfrak{l}}$$
is standard upper triangular. Both $\mathrm{End}_F(X(\hat{c}))^0_l\subseteq \mathfrak{l}$ and $\mathrm{End}_D(X(d))^0_m\subseteq \mathfrak{l}$ 
are standard upper triangular with Levi blocks of equal sizes, and so
$$\mathrm{stab}_{\a_{S,T}}(r)_{|\mathfrak l}=\mathrm{stab}_{\a_U}(r')_{|\mathfrak{l}}.$$ 
Then $\mathfrak{q}_{S,T}$ has a Richardson element by Lemma \ref{keylemma}.
\end{proof}

The following example illustrate the construction of $X(\hat{c})$ in the proof above. 

\begin{example}
Let $Q$ be the quiver 
$$\xymatrix@=5mm{ 1\ar[r]& 2\ar[r]&3&4\ar[l],}$$
where $m=4$ is a source.
Let $d=( 3, 1, 3, 4)$. Then the rigid module $X(d)$ is 
\begin{center}
$\xymatrix@=2mm{  
1\ar [dr]&&3\ar@{=>}[dl] \ar@{=>}[drr]&& & \\
& 2\ar[dr]&&&4\ar[dl]& \oplus\\
&&3&3&&}$ 
$\xymatrix@=2mm{  
1\ar [dr]&&3\ar@{=>}[dl] \ar@{=>}[drr]&& & \\
& 2\ar[dr]&&&4\ar[dl]& \oplus \\
&&3&3&&}$ 
$\xymatrix@=2mm{  
&& 3\ar@{=>}[drr] \ar@{=>}[dl] & \\
& 2\ar@{=>}[dl]\ar[dr]    &&&4\ar[dl]  & \oplus \\
1\ar [dr]&&3\ar@{=>}[dl]& 3 & \\
& 2\ar[dr]\\
&&3&}$ 
$\xymatrix@=2mm{  
&4\ar[dl]& \\
3  &}$ 
\end{center}
where the summands are ordered from big to small with respect to $\leq_4$. In the base interval 
$[3, 4]$, 
the last summand has the smallest $\Delta$-support $\{4\}$ and so is the smallest one, the other
summands have the same   $\Delta$-support $\{3, 4\}$ and so 
they are compared at next interval $[1, 3]$, in which case representations with smaller supports are actually bigger. We have 
\begin{itemize}\item[(1)]
$(\mathrm{End}X(d))_4=\begin{pmatrix} * &* &*&* \\ * &* &*&*\\ & &*&*\\ & & &*\end{pmatrix}$, 
a standard upper triangular parabolic in $\mathfrak{gl}_4$.

\item[(2)] The number of blocks $l=3$ with the sizes 2, 1, 1  and $\hat{c}=(1, 2, 4)$.

\item[(3)] The representation $P(\hat{c})=P_1 \oplus P^2_2\oplus P_3^4$ is the projective representation of the quiver $\xymatrix@=5mm{ 1\ar[r]& 2\ar[r]&3}$.

\item[(4)] The rigid $F$-module $X(\hat{c})$ is as follows. 
\begin{center} 
$\xymatrix@=2mm{&\\& \\ 3 &\oplus }$
 $\xymatrix@=2mm{&\\& \\ 3 &\oplus }$
 $\xymatrix@=2mm{  
\\
&3\ar@{=>}[dl]&     \\
 2\ar[dr] &&\oplus\\
&3&}$
 $\xymatrix@=2mm{  
&& 3 \ar@{=>}[dl] & \\
& 2\ar@{=>}[dl]\ar[dr]      &\\
1\ar [dr]&&3\ar@{=>}[dl]& \\
& 2\ar[dr]\\
&&3&}$ \\
\end{center}
The summands  are ordered from small to big with respect to $\leq_3$. The space 
$\mathrm{End}(X(\hat{c}))^0_3$ is the space of induced homomorphisms between the top 3's in the summands and indeed

$$\mathrm{End}(X(\hat{c}))^0_3=(\mathrm{End}X(d))_4.$$
 
\end{itemize}
\end{example}

Denote by $E_{ij}$ the  elementary matrix with $1$ at $(i, j)$-entry and $0$ elsewhere. 

\begin{theorem} \label{lemmam2}
If $\g$ has type $\mathbb{D}$ and
$(\epsilon,\omega)=(2,1)$, then the seaweed $\mathfrak{q}_{S,T}\subseteq \g$ has a Richardson element.
\end{theorem}
\begin{proof}  
Let $W=S\backslash \{1\}$ and 
$$
\mathfrak{q}_{S, T}^1=\bigoplus_{\begin{tabular}{c} $\alpha$  supported at \\ $\alpha_1$, $\alpha\in \Phi_S^+$\end{tabular}}\g_{\alpha}.
$$
Then $$\mathfrak{q}_{S, T}=\mathfrak{q}_{W, T}\oplus \mathfrak{q}_{S, T}^1 \mbox{ and }
\mathfrak{n}_{S, T}=\mathfrak{n}_{W, T}\oplus \mathfrak{q}_{S, T}^1. 
$$
Note that $\mathfrak{q}_{W, T}$ is a seaweed of type $\mathbb{A}$ and so 
it has a Richardson element $r\in \mathfrak{n}_{W, T}$.

Since $\alpha_1\not\in T$, we 
have $[\mathfrak{q}_{S,T},\mathfrak{q}_{S, T}^1]
\subseteq \mathfrak{q}_{S, T}^1$. Then $r+r'$ with $r'\in\mathfrak{q}_{S,T}^1$ is a Richardson element
in $\mathfrak{q}_{S, T}$ if 
$[\mathrm{stab}_{\mathfrak{q}_{W,T}}(r), r']=\mathfrak{q}_{S, T}^1$. Indeed, let $x+x'\in\mathfrak{n}_{W,T}\oplus \mathfrak{q}_{S,T}^1$
and let $y\in \mathfrak{q}_{W,T}$ and $y'\in \mathrm{stab}_{\mathfrak{q}_{W,T}}(r)$ such that $$[y,r]=x\mbox{ and }[y',r']=x'-[y,r'].$$
Then $$[y+y',r+r']=x+x'$$ and so $r+r'$ is a Richardson element.
Hence, to prove the lemma, it suffices to show $$[\mathrm{stab}_{\mathfrak{q}_{W,T}}(r), r']
=\mathfrak{q}_{S, T}^1.$$

We choose the representation of $\g$ given by $2n\times 2n$-matrices anti-symmetric to the 
anti-diagonal, $$\g_{\alpha_1}=k\cdot (E_{n-1,n+1}-E_{n,n+2})\mbox{ and  } 
\g_{\alpha_2}=k\cdot (E_{n-1,n}-E_{n+1,n+3}).$$ Then $\mathfrak{q}_{S, T}^1$
has a basis $E_{l,n+1}-E_{n,n-l+1}$ for $l=n-a+2,\cdots,n-1$, where 
$$ a=\left \{ \begin{tabular}{ll}
min\{$p|p>\epsilon, \alpha_p\not\in S$ \} & if such a $p$ exists, \\ 
$n+1$ & otherwise.
\end{tabular} \right.$$ Fix an embedding 
$\mathfrak{gl}_n\subseteq \g$: $$E_{ij}\mapsto E_{ij}-E_{2n-j+1,2n-i+1}.$$
We may assume the Richardson element $r$ is $r(d)$, constructed from a rigid good module $X(d)=\oplus_iX^i$, 
as in Section \ref{REconstruction}. Recall that the underlying quiver $Q$ is of type $\mathbb{A}_m$, where 
$m$ is the number of Levi-blocks in $\mathfrak{q}_{W,T}$.

Choose an embedding 
$$\mathrm{End}_A(P(d))\subseteq \g$$ 
such that 
$$\mathrm{End}_A(P(d))=\mathfrak{q}_{W,T}
\text{ and }\mathrm{End}_D(X(d))=\mathrm{stab}_{\mathfrak{q}_{W,T}}(r).
$$
Then $$\mathrm{End}_D(X^i)\subseteq \mathrm{stab}_{\mathfrak{q}_{W,T}}(r),$$
{with $$1_{X^i}=\sum_{j\in\mathrm{Supp}_\Delta(X^i)}(E_{x_{ij},x_{ij}}-E_{2n-x_{ij}+1,2n-x_{ij}+1}),$$ where the $x_{ij}$ are constructed from the summand $X^i$ as in Section \ref{REconstruction}.}

As $\epsilon>\omega$, the vertex $m$ is a source. The $m$th Levi-block in $\mathfrak{q}_{W,T}$ is of rank  $1$, i.e. $d_m=1$, as $\alpha_2 \not\in W$, and it is spanned by $E_{n,n}-E_{n+1, n+1}$. 

We will decompose $\mathfrak{q}^1_{S,T}$ as direct sum of subspaces $V_i$, determined by the summands of $X(d)$, and then construction a 'Richardson element' $r_i$ in each subspace, in the sense that 
$$[\frak{q}_{S, T}, r_i]=V_i.$$
Observe that for any $x\in \mathfrak{g}_{\alpha}\subseteq \mathfrak{q}^1_{S,T}$, 
$$[E_{n, n}-E_{n+1, n+1}, x]=x$$ 
and when $i <n$,
$$[E_{i, i}-E_{2n-i+1, 2n-i+1}, x]=\left\{ \begin{tabular}{ll} $x$ & if $\alpha$ is supported at $\alpha_{n-i+1}$;\\ 0 & otherwise.
\end{tabular}\right.$$

The orientation of the arrows at vertex $m-1$ determines the construction of $V_i$.
There are two cases to be considered. 

Note that  there is a unique summand of $X(d)$, say $X^{i_0}$, such that $X^{i_0}|_Q$ contains $P_m$
as a summand and $X^{i_0}\not= P_m$, i.e. $X^{i_0}|_Q$ has at least two summands and so
$$
1_{X^{i_0}}-(E_{n, n}-E_{n+1, n+1})\not= 0. 
$$ 

Case (1) The vertex $m-1$ is non-admissible.
In this case, 
$$
\dim \mathfrak{q}^1_{S,T}=d_{m-1}.
$$

Note that each summand $P_{m-1}$ is contained in  a different indecomposable summand of $X(d)$ and we
 may assume these summands are $X^1, \dots, X^{d_{m-1}}$ and $x_{i(m-1)}=\sum_{s\leq m-1}d_s-i+1$.
For $1\leq i\leq d_{m-1}$, let 
$V_i$ be the unique roots space $\frak{g}_{\alpha}$ such that 
\begin{itemize}\item[(i)]  when  $i\not= i_0$, $[1_{X^i}, \mathfrak{g}_{\alpha}]\not= 0$;
\item[(ii)]  when  $i= i_0$, $[1_{X^{i}}-(E_{n, n}-E_{n+1, n+1}), \mathfrak{g}_{\alpha}]\not= 0$. 
\\Note that  in this case, $x_{im}=n=\sum_{s}d_s$.
\end{itemize}
Then $$\frak{q}_{S, T}^1=\oplus_i V^i. $$

Let $r_j\in V_j$ and let $\delta_{ij}$ be Kronecker numbers. We have 
$$
[1_{X_i},r_j]=\left\{
\begin{tabular}{ll}$\delta_{ij}r_j$ & if $i\not= i_0$; \\
$2r_{i_0}$ & if $i=j=i_0$;\\
$r_j$ & if $i=i_0$ and $j\not= i_0$.
\end{tabular}
\right.
$$

Choose a nonzero element $r_i\in V_i$ and let $$r'=\sum_i r_i.$$

Case (2) The vertex $m-1$ is admissible. Then it is a sink and  

$$
\dim \mathfrak{q}^1_{S,T}>d_{m-1}.
$$

Let 
$$
V_i=
\left\{ \begin{tabular}{ll}
$\mathfrak{q}^1_{S,T}\cap \bigoplus_{[1_{X^i},\;\g_\alpha]\neq 0} \g_\alpha$ & if $i\not= i_0$;\\
$\mathfrak{q}^1_{S,T}\cap \bigoplus_{[1_{X^i}-(E_{n, n}-E_{n+1, n+1}),\;\g_\alpha]\neq 0} \g_\alpha$ & if $i= i_0$;
\end{tabular}
\right. $$
Note that $V_i$ can be $0$ and we have
$$V=\oplus_i V_i.$$
When $V_i\not= 0$, we let $\beta_i$ be the smallest root such that 
\begin{itemize}\item[(i)] when  $i\not= i_0$,
$[1_{X^i},\g_{\beta_i}]\neq 0$;
\item[(ii)] when  $i= i_0$, $[1_{X^i}-(E_{n, n}-E_{n+1, n+1}),\g_{\beta_i}]\neq 0$.
As in Case (1) (ii), $x_{im}=n$. 
\end{itemize}
Then for any non-zero $r_i\in \g_{\beta_i}$, 
$$V_i=\sum_j [k\cdot x^j,r_i],$$ 
where $x$ is the endomorphism induced 
by $\alpha\alpha^*$ with $\alpha$ the arrow from vertex $m-2$ to vertex $m-1$ (see Lemma \ref{prettylemma}). 
Choose a non-zero element $r_i\in \g_{\beta_i}$ and let $$r'=\sum_{i}r_i.$$

In both cases, 
$$[\mathrm{stab}_{\mathfrak{q}_{W,T}}(r),r']=\bigoplus_i V_i=\mathfrak{q}^1_{S,T}.$$ 
Therefore $\mathfrak{q}_{S,T}$ has a Richardson element.
\end{proof}

\begin{remark}
\begin{itemize}
\item[a)] The Richardson elements and their stabilisers can be explicitly constructed  
using results from \cite{JSY}, the proofs of Theorem \ref{lemmam1} and Theorem \ref{lemmam2}.
The work of Baur \cite{baur} on parabolic Lie algebras is also needed in the case Theorem \ref{lemmam1}. 
\item[b)] The method of Theorem \ref{lemmam2} can be generalised to Lie algebras of exceptional
types and therefore provide an explanation why Richardson elements do not exist for some seaweed Lie algebras of 
type $\mathbb{E}_8$.
\end{itemize}
\end{remark}

\newcommand{\q}{\mathfrak{q}}

We end this paper with an example of constructing Richardson elements, using the method discussed in Theorem \ref{lemmam2}.

\begin{example}
Let $\g=\mathfrak{so}_{10}$, a Lie algebra of type $\mathbb{D}_5$. 

\vspace{2mm}

(1)  Consider the seaweed Lie algebras 
$\q_{S, T}$ and $\q_{K, L}$ with $T=\{\alpha_2, \alpha_4, \alpha_5\}$, $S=\{\alpha_1, \alpha_3, \alpha_4\}$, $L=\{\alpha_2, \alpha_3, \alpha_4\}$ and 
$K=\{\alpha_1, \alpha_3, \alpha_5\}$. These two seaweeds are like those  discussed in the proof of Theorem \ref{lemmam2} and have 
the following shapes. The matrices are anti-symmetric to the anti-diagonal.

\vspace{5mm}

\begin{center}
$\left(\begin{tabular}{cccccccccc}
 *& & & & & & & & & \\
 *& *& *& *& &  *& & & &  \\
 *& *& *& *& &  *& & & &  \\
&&&  * &  &  * & &  & & \\
&&& * & * &  & *&  * & * &  \\
&&&&&     *& &  & &  \\
&&&&&   * & * & *  & *&  \\
&&&&&    & &  * & * &  \\
&&&&&    & &  * & * &  \\
&&&&&    && *& *& * \\
\end{tabular}\right)
$, \;
$\left(\begin{tabular}{llllllllllll}
 *&  *& & & & & & & & \\
& *& && & & & & &  \\
& *& *& *& &  *& & & &  \\
& *& *&  *&  & * & & & & \\
& *& *& * & * &  & * &  * & &  \\
&&&&&     *& &  & &  \\
&&&&&    *& * & *  & &  \\
&&&&&     *& *&  * &  &  \\
&&&&&     *& * &  * & * & *  \\
&&&&&    &&&& *\\
\end{tabular}\right)
$.
\end{center}

\vspace{5mm}

(2) The seaweeds $\q_{S\backslash \{1\}, T}$ and $\q_{K\backslash \{1\}, L}$ are of type 
$\mathbb{A}$. They are isomorphic to $\mathrm{End}_A(P(c))$ and $\mathrm{End}_A(P(d))$ 
of the following quivers, respectively, where $c=(1, 2, 1, 1)$ and $d=(1, 1, 2, 1)$, 
$$\xymatrix{1&2\ar[l]\ar[r]& 3&4\ar[l], & &1\ar[r]&2&\ar[l]3&\ar[l]4}.$$
In both case, the vertex $m$ in the proof of Theorem \ref{lemmam2} is $4$. The vertex $3$ $(=m-1)$ is a 
sink for  $\q_{S\backslash \{1\}, T}$ and is non-admissible for $\q_{K\backslash \{1\}, L}$.
We have
$$\q_{S, T}^1=V_1\oplus V_2 \;\;\mbox{ and } \;\;
\q_{K. L}^1=W_1\oplus W_2$$ with $$V_1=\g_{\alpha_1+\alpha_3+\alpha_4}, \;
V_2=\g_{\alpha_1}\oplus\g_{\alpha_1+\alpha_3}, \;W_1=\g_{\alpha_1+\alpha_3}\;\; \mbox{ and }\;\;
W_2=\g_{\alpha_1}.$$  

\vspace{5mm}

(3) The Richardson elements of $\q_{S\backslash \{1\}, T}$ and $\q_{K\backslash \{1\}, L}$  are 
as below. Entries of the same colour come from the same indecomposable summand. Note that for
$X(d)$, one of the indecomposable 
summands is a Verma module, so both the $4$th coloumn and row are zero. 
\vspace{5mm}

\begin{center}
$\left(\begin{tabular}{cccccccccc} 
 0& & & & &  & & & & \\
 1& 0& 0& 0& &   0&  & & &  \\
 0& 0& 0& \color{blue} 1&   &   0& & & &  \\
&&&  0&    &    0 & &  & &\\
&&& \color{blue} 1 & 0 &  &  0 &   0 &  0 &  \\
&&&&&     0& &  & &  \\
&&&&&    \color{blue} -1 & 0 & \color{blue} -1  & 0 &  \\
&&&&&    & &  0 & 0 &  \\
&&&&&    & &  0 & 0 &  \\
&&&&&    && 0&{-1}& 0 \\
\end{tabular}\right)
$, \;
$\left(\begin{tabular}{cccccccccc}  
 0&  1& & & & & & & & \\
& 0& && & & & & &  \\
& 1& 0& 0& &   0& & & &  \\
& 0& 0&  0&  &   0 & &  & & \\
& 0& 1& 0 & 0 &  &  0 &   0 & &  \\
&&&&&     0& &  & &  \\
&&&&&   0 & 0 & 0  & &  \\
&&&&&     -1&0 &  0 &  &  \\
&&&&&     0& 0 &  -1 & 0 & -1  \\
&&&&&    &&&& 0 \\
\end{tabular}\right)
$.
\end{center}

\vspace{5mm}

(4) The stabilizers of the two Richardson elements are as follows.

\vspace{5mm}
\begin{center}
$\left(\begin{tabular}{cccccccccc} 
   a& & & & & & & & & \\
  b&  a &   0&   \color{green} g& &   0& & & &  \\
  \color{red} f &   0&  \color{blue}c&   \color{blue}d& &   0& & & &  \\
&&&   \color{blue}c&  &   0 & & & & \\
&&&    \color{blue}e&  \color{blue}c &  &  0 &   0 &  0 &  \\
&&&&&      \color{blue}-c& &  & &  \\
&&&&&      \color{blue}-e&   \color{blue}-c&   \color{blue}-d &   \color{green}-g &  \\
&&&&&    & &    \color{blue}-c &   0 &  \\
&&&&&    & &   0 &   -a&  \\
&&&&&    &&  \color{red}-f&  -b&   -a\\
\end{tabular}\right)
$, \;
$\left(\begin{tabular}{cccccccccc} 
  a&   b& & & & & & && \\
&  a & & & & & & & &  \\
 &   c&  a&   0& &   0& & & &  \\
&  \color{red} f&  0&   \color{blue}e&  &   0 & &  & & \\
&  d&  c&    \color{green}g&  a  &  &  0 &   0 & &  \\
&&&&&       -a& &  & &  \\
&&&&&      \color{green}-g&   \color{blue}-e&   0 &   & \\
&&&&&      -c&  0 &    -a &  &  \\
&&&&&      -d&  \color{red}-f &   -c &   -a&  -b  \\
&&&&&    && &&   -a\\
\end{tabular}\right)
$.\\
\end{center}
There are two indecomposable direct summands in each of $X(c)$ and 
$X(d)$. The different colours indicate homomorphisms between different pairs of
summands. 

\vspace{5mm}

(5) Denote the two Richardson element in (3)
by $r_{S\backslash \{1\}, T}$ and $r_{K\backslash \{1\}, L}$. 
 The action of $\mathrm{stab}_{\q_{S\backslash\{1\}, T}}(r_{S\backslash \{1\}, T})$ on $\q^1_{S\backslash\{1\}, T}$
is equivalent to the natural action of
$\left(\begin{tabular}{ccc} 
a +{\color{blue} c}& 0&\color{green} g \\
  0&\color{blue}2c& \color{blue}d \\
 0&0& \color{blue}2c  \\
\end{tabular}\right)$
on 
$k^3$, although in the proof of Theorem \ref{lemmam2}, we only use the action of 
$\left(\begin{tabular}{ccc} 
a+ \color{blue}c & 0&0 \\
  0&\color{blue}2c& \color{blue}d \\
 0&0& \color{blue}2c  \\
\end{tabular}\right)$
with $r'=\left(\begin{tabular}{c} 
1\\ 0 \\  1
\end{tabular}\right)\in\mathfrak{q}_{S,T}^1$. The action of $\mathrm{stab}_{\q_{K\backslash\{1\}, L}}(r_{K\backslash \{1\}, L})$ on $\q^1_{K\backslash\{1\}, L}$
is equivalent to the natural action of $\left(\begin{tabular}{cc} 
a + \color{blue}e& 0 \\
  0&\color{blue}2e 
\end{tabular}\right)$
on   $k^2$, and $r'=\left(\begin{tabular}{c} 
1 \\   1
\end{tabular}\right)\in\mathfrak{q}_{K,L}^1$. So we have the following Richardson elements for the two seaweeds $\q_{S, T}$ and $\q_{K, L}$, with the red entries coming from the contributions of $r'$ in $\mathfrak{q}^{1}_{S, T}$ and $\mathfrak{q}^{1}_{K, L}$ respectively.

\vspace{5mm}
\begin{center}
$\left(\begin{tabular}{cccccccccc} 
 0& & & & &    & & & & \\
 1& 0& 0& 0&     &  \color{red} 1& & & &  \\
 0& 0& 0& \color{blue} 1&    &  0& & & & \\
&&&  0&  &     \color{red}1& &  & & \\
&&& \color{blue} 1 & 0   &  &  \color{red} -1 &  0 & \color{red}-1 &  \\
&&&&&     0&&   & &  \\
&&&&&    \color{blue} -1 & 0 & \color{blue} -1  & 0 &  \\
&&&&&    & &  0 & 0 & \\
&&&&&    & &  0 & 0 &  \\
&&&&&    && 0&{-1}& 0 \\
\end{tabular}\right)
$, \;
$\left(\begin{tabular}{cccccccccc}  
 0&  1&& & & & & & & \\
& 0& && & & & & &  \\
& 1& 0& 0& &  \color{red} 1& & & &  \\
& 0& 0&  0&  &  \color{red} 1 & &  & & \\
& 0& 1& 0 & 0 &  &  \color{red} -1&  \color{red} -1 & &  \\
&&&&&     0& & & &  \\
&&&&&   0 & 0 & 0  & &  \\
&&&&&     -1&0 &  0 & &  \\
&&&&&     0& 0 &  -1 & 0 & -1  \\
&&&&&    &&&& 0 \\
\end{tabular}\right)
$.
\end{center}
\end{example}


\vspace{5mm}

{\parindent=0cm 
BTJ:
Department of Mathematical Sciences,
NTNU in Gj\o vik, Norwegian University of Science and Technology,
2802 Gj\o vik, Norway. \\
Email: bernt.jensen@ntnu.no \\}

{\parindent=0cm
XS:
Department of Mathematical Sciences,
University of Bath,
Bath BA2 7JY,
United Kingdom.\\
Email: xs214@bath.ac.uk }
\end{document}